\newcommand{\bb}{\mathbb}
\newcommand{\Cu}{\mathcal{C}}
\newcommand{\Z}{\bb Z}
\newcommand{\ZZ}{\bb Z}
\newcommand{\R}{\bb R}
\newcommand{\RR}{\bb R}
\newcommand{\s}{\bb S}
\newcommand{\Co}{\mathcal C}
\newcommand{\Nu}{\mathcal N}
\newcommand{\Ca}{\frak C}
\newcommand{\B}{\frak B}
\newcommand{\Sp}{\mathcal S}
\newcommand{\Pl}{\bb P}
\newcommand{\wrt}[1]{\mathrm{d}{#1}}
\newcommand{\diag}{\operatorname{diag}}
\newcommand{\dist}{\operatorname{dist}}
\newcommand{\pr}{\operatorname{pr}}
\newcommand{\spn}{\operatorname{span}}
\newcommand{\Euc}{\operatorname{Euc}}
\newcommand{\vol}{\operatorname{vol}}
\newcommand{\SL}{\operatorname{SL}}
\newcommand{\GL}{\operatorname{GL}}
\newcommand{\SO}{\operatorname{SO}}
\newcommand{\Id}{\operatorname{Id}}
\newcommand{\F}{\mathcal F}
\newcommand{\La}{\Lambda}
\newtheorem{Theorem}{Theorem}
\numberwithin{Theorem}{section}
\newtheorem{theo}[Theorem]{Theorem}
\newtheorem{coro}[Theorem]{Corollary}
\newtheorem{lemm}[Theorem]{Lemma}
\newtheorem*{lemma*}{Lemma}
\newtheorem*{question*}{Question}
\newtheorem*{theorem*}{Theorem}
\numberwithin{equation}{section}
\theoremstyle{remark}
\newtheorem{rema}[Theorem]{\sc Remark}
\begin{document}
\title[Spherical Averages]{Spherical averages of Siegel transforms for higher rank diagonal actions and applications}
\author{Jayadev~S.~Athreya}
\author{Anish Ghosh}
\author{Jimmy Tseng}

\address{J.S.A.: Department of Mathematics, University of Illinois Urbana-Champaign, 1409 W. Green Street, Urbana, IL 61801, USA}
\email{jathreya@illinois.edu}
\address{A.G.: School of Mathematics, Tata Institute of Fundamental Research, Homi Bhabha Road, Mumbai 400005 India}
\email{ghosh@math.tifr.res.in}
\address{J.T.:  School of Mathematics, University of Bristol, University Walk, Bristol, BS8 1TW UK}
\email{j.tseng@bristol.ac.uk}

    \thanks{J.S.A.\ partially supported by NSF grant DMS 1069153, and NSF grants DMS 1107452, 1107263, 1107367 ``RNMS: GEometric structures And Representation varieties" (the GEAR Network).}
    \thanks{A.G. partially supported by an ISF-UGC grant.}
    \thanks{J.T. acknowledges the research leading to these results has received funding from the European Research Council under the European Union's Seventh Framework Programme (FP/2007-2013) / ERC Grant Agreement n. 291147.}
    
  \subjclass[2000]{37A17, 11K60, 11J70}
\keywords{Diophantine approximation, equidistribution, Siegel transforms}  
    
\begin{abstract}  
We  investigate the geometry of approximates in multiplicative Diophantine approximation. Our main tool is a new averaging result for Siegel transforms on the space of unimodular lattices in $\RR^n$ which is of independent interest. 
\end{abstract}
\maketitle
\tableofcontents

\section{Introduction}
The main result in the present paper is a new averaging theorem for Siegel transforms on the homogeneous space $\SL_{n}(\R)/\SL_{n}(\Z)$. Such results have found several applications in number theory and indeed our motivation is to investigate the distribution of approximates in certain foundational results in Diophantine approximation. In \cite{AGT1}, we studied the phenomenon of \emph{spiraling} of approximates in Dirichlet's theorem and obtained a number of distribution results for approximates. In the present paper, we continue our investigations in this subject and present a new multi parameter averaging result for Siegel transforms and as a consequence, obtain new results on the geometry of approximates in \emph{multiplicative} and \emph{weighted} Diophantine approximation. We briefly recall the setup in \cite{AGT1} and then state our main results. The bulk of the paper is concerned with the proof of Theorem \ref{theorem:siegel:equidistUpper}, our result on averages of Siegel transforms. The general principle that equidistribution of spherical averages implies distribution results for approximates applies in a wide variety of situations. In the final section, we briefly survey some such situations.

\subsection{Dirichlet's theorem and spiraling}

Let $\alpha_{ij}, 1 \leq i \leq m, 1\leq j \leq n$ be real numbers and $Q > 1$. Then Dirichlet's theorem in Diophantine approximation states that there exist integers $q_1,\dots, q_m, p_1,\dots, p_n$ such that  
\begin{equation}\label{d0}
1 \leq \max\{|q_1|,\dots,|q_m|\} \leq Q
\end{equation}
\noindent and
\begin{equation}\label{d1}
\max_{1 \leq i \leq n} |\alpha_{i1}q_1 + \dots + \alpha_{im}q_m - p_i| \leq Q^{-m/n}.
\end{equation}
\noindent In our earlier work \cite{AGT1}, we studied the problem of \emph{spiraling} of the approximates appearing in Dirichlet's theorem and showed as a consequence, that on average, the directions of approximates spiral in a uniformly distributed fashion on the unit sphere of one lower dimension. In fact, the problem can be recast as a special case of a more general equidistribution result in the space of lattices. As far as we are aware, this is the only work addressing the natural question of how the approximates appearing in Dirichlet's theorem are distributed. In \cite{AGT1}, we considered vectors rather than linear forms although the proof goes through for linear forms with very minor modifications.

\noindent Given ${\bf x} \in \R^{d}$, we form the associated unimodular lattice in $\R^{d+1}$
\begin{equation}\nonumber
\Lambda_{\bf x} := \begin{pmatrix} \Id_{d} & {\bf x}\\0 & 1 \end{pmatrix} \bb Z^{d+1} = \left\{\begin{pmatrix} q{\bf x} - {\bf p}\\ q \end{pmatrix} ~:~{\bf p} \in \bb Z^{d}, q \in \bb Z\right\}.
\end{equation}

\noindent Then we can view the approximates $({\bf p}, q)$ of ${\bf x}$ appearing in Dirichlet's Theorem as points of the lattice $\Lambda_{\bf x}$ in the region
\begin{equation}\label{cone}
R := \left\{{\bf v} = \begin{pmatrix}{\bf v}_1\\ v_2 \end{pmatrix} \in \bb R^{d} \times \bb R~:~\|{\bf v}_1\||v_2|^{1/d} \leq 1\right\}.  
\end{equation} 

The set $R$ is a thinning region around the $v_2$-axis, and the following sets are used to study the distribution of lattice approximates in $R$. Let

\begin{equation}\label{defR1}
R_{\epsilon, T} := \left\{ {\bf v} \in  R~:~ \epsilon T \le v_2 \le T \right\}  
\end{equation}

\noindent and, for a subset $A$ of $\bb S^{d-1}$ with zero measure boundary,

\begin{equation}\label{defR2}
R_{A, \epsilon, T} := \left\{ {\bf v} \in R_{\epsilon, T}~:~ \frac{{\bf v}_1}{\|{\bf v}_1\|} \in A \right\}.
\end{equation} 

\noindent For a unimodular lattice $\La$, define 

$$N(\Lambda, \epsilon, T) = \#\{\Lambda \cap R_{\epsilon, T}\}$$ and 

$$N(\Lambda, A, \epsilon, T) = \#\{\Lambda \cap R_{A, \epsilon, T}\}.$$

\noindent Let $dk$ denote Haar measure on $K := K_{d+1}:=\SO_{d+1}(\bb R)$, and let $X_{d+1} := \SL_{d+1}(\bb R)/\SL_{d+1}(\bb Z)$. In \cite{AGT1}, we proved


\begin{Theorem}\label{AGT1}
For every $\Lambda \in X_{d+1}$, $A \subset \bb S^{d-1}$ as above, and for every $\epsilon > 0$, 
\begin{equation}\label{main-1}
 \lim_{T \rightarrow \infty} \frac{\int_{K} N(k^{-1}\Lambda, A, \epsilon, T)~\wrt k}{\int_{K} N(k^{-1}\Lambda, \epsilon, T)~\wrt k}= \vol(A). 
\end{equation}
\end{Theorem}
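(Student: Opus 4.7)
The plan is to establish the stronger assertion that the identity
\begin{equation*}
\int_K N(k^{-1}\Lambda, A, \epsilon, T) \, \wrt k \;=\; \vol(A) \cdot \int_K N(k^{-1}\Lambda, \epsilon, T) \, \wrt k
\end{equation*}
holds \emph{exactly} for every $T > 0$, not merely in the limit, which reduces Theorem \ref{AGT1} to a direct rotational-invariance computation.

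First I would unfold the $K$-average. For any bounded Borel set $\Omega \subset \R^{d+1}$, Fubini (justified because $R_{\epsilon,T}$ is bounded, so only finitely many $v \in \Lambda$ can contribute) gives
\begin{equation*}
\int_K \#\{k^{-1}\Lambda \cap \Omega\} \, \wrt k \;=\; \sum_{v \in \Lambda \setminus \{0\}} \int_K \mathbf{1}_\Omega(k^{-1}v) \, \wrt k.
\end{equation*}
Since $K = \SO_{d+1}(\R)$ acts transitively on each sphere $S(r) := \{w \in \R^{d+1} : \|w\| = r\}$ and Haar measure pushes forward to the normalized surface measure $\bar\sigma_d$ on $S(\|v\|)$, the inner integral equals $\bar\sigma_d(\Omega \cap S(\|v\|))$, a function of $\|v\|$ alone.

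The main step is to parametrize $S(r)$ in polar coordinates around the $v_2$-axis: every $u \in S(r)$ takes the form $u = r(\sin\theta \cdot w,\, \cos\theta)$ with $\theta \in [0,\pi]$ and $w \in \bb S^{d-1}$, and $d\bar\sigma_d = c_d \sin^{d-1}\theta \, d\theta \, d\sigma_{d-1}(w)$ for a normalizing constant $c_d$. The inequalities defining $R_{\epsilon,T}$, namely $\epsilon T \le v_2 \le T$ and $\|v_1\||v_2|^{1/d} \le 1$, depend only on $r$ and $\theta$, whereas $R_{A,\epsilon,T}$ imposes the extra condition $w \in A$. The integral therefore factors into $\theta$- and $w$-pieces, yielding
\begin{equation*}
\bar\sigma_d\bigl(R_{A,\epsilon,T} \cap S(r)\bigr) \;=\; \vol(A) \cdot \bar\sigma_d\bigl(R_{\epsilon,T} \cap S(r)\bigr),
\end{equation*}
and summing over $v \in \Lambda \setminus \{0\}$ delivers the claimed identity.

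I do not anticipate a substantive obstacle: the hypothesis that $\partial A$ has $\sigma_{d-1}$-measure zero is used only to make $\vol(A)$ well-defined (equivalently, $\mathbf{1}_A$ Riemann-integrable on $\bb S^{d-1}$), and the factorization is nothing more than the $\SO_d$-symmetry of $R$ in the $v_1$-variables combined with Fubini on the sphere.
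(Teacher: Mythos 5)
Your proposal is correct, and it establishes something slightly \emph{stronger} than what the theorem asserts: the ratio is identically equal to $\vol(A)$ for every $T$, not merely in the limit. The key structural observation is sound. After Fubini, each term $\int_K \mathbf{1}_\Omega(k^{-1}v)\,\wrt k$ is the normalized surface measure of $\Omega\cap S(\|v\|)$ (the push-forward of Haar measure under $k\mapsto k^{-1}v$ is the unique $K$-invariant probability measure on the orbit sphere). In polar coordinates $u = r(\sin\theta\, w,\cos\theta)$ the defining inequalities of $R_{\epsilon,T}$ involve only $\|v_1\|=r\sin\theta$ and $v_2=r\cos\theta$, because the norm in the definition of $R$ is the Euclidean norm, so $R_{\epsilon,T}$ is invariant under $\SO_d$ acting on the $v_1$-block; the extra constraint in $R_{A,\epsilon,T}$ isolates $w\in A$. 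The spherical measure factors, and the claimed identity follows, term by term, for each lattice vector. (The excluded set $\{v_1=0\}$ where the direction $v_1/\|v_1\|$ is undefined has measure zero on each sphere, so it does not disturb the identity.)

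This is, however, a genuinely different route from the one taken in the paper and in \cite{AGT1}. There the numerator and denominator are rewritten, via the scaling $g_t R_{A,\epsilon,T}=R_{A,\epsilon,1}$ with $e^t=T$, as spherical averages of Siegel transforms along $g_t K$, and Theorem~\ref{theorem:siegel:equidist} (equidistribution of these averages) is invoked to compute each limit separately as $\vol_{\RR^{d+1}}(R_{A,\epsilon,1})$ and $\vol_{\RR^{d+1}}(R_{\epsilon,1})$. That machinery is necessary in the paper's main Theorems~\ref{thmWeightedDASphereAve} and~\ref{thmMultiDASphereAve}: there $\|\cdot\|_{\boldsymbol{r}}$ (a sup-type quasi-norm with weights) and $\|\cdot\|_{\pr}$ (a product) replace the Euclidean norm, so $R^{(\boldsymbol{r},\boldsymbol{s})}$ and the star body $P$ are \emph{not} $\SO_m$-invariant in the $v_1$-block, and moreover the directional constraint $\boldsymbol{v}_1\in g^{(\boldsymbol{r})}_{-\log T}(\Co A)$ depends on $T$. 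Your exact factorization breaks down in those settings, which is precisely why the authors build the equidistribution theorem. So the trade-off is: your argument is shorter, entirely elementary, gives an exact (rather than asymptotic) identity, and does not use the zero-measure-boundary hypothesis on $A$; but it exploits a rotational symmetry that is special to Theorem~\ref{AGT1}, whereas the paper's approach also yields the individual asymptotics $\int_K N(k^{-1}\Lambda, A, \epsilon, T)\,\wrt k \to \vol_{\RR^{d+1}}(R_{A,\epsilon,1})$ and, crucially, extends to the weighted and multiplicative regions.
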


\noindent The main tool in proving Theorem \ref{AGT1} is an equidistribution result for spherical averages. Given a lattice $\Lambda$ in ${\bb R}^{d+1}$ and a bounded Riemann-integrable function $f$ with compact support on $\bb R^{d+1}$, denote by $\widehat{f}$ its \emph{Siegel transform}:

$$ \widehat{f}(\Lambda) := \sum_{\bf v \in \Lambda \backslash \{\boldsymbol{0}\}} f(\bf v).$$

\noindent Then

\begin{Theorem}\label{theorem:siegel:equidist}
Let $f$ be a bounded Riemann-integrable function of compact support on $\R^{d+1}$.  Then for any $\La \in X_{d+1}$, $$ \lim_{t \to \infty} \int_{K_{d+1}} \widehat{f}(g_t k \La) ~\wrt k = \int_{X_{d+1}}\widehat{f}~\wrt \mu.$$

\end{Theorem}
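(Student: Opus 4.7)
My plan is to combine Margulis-type equidistribution of expanding $K_{d+1}$-translates on $X_{d+1}$ with a uniform-in-$t$ non-escape-of-mass estimate controlling the contribution from the cusp.

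First, by a sandwich argument I reduce to the case $f \in C_c(\R^{d+1})$. Given $\varepsilon > 0$, Riemann integrability furnishes continuous compactly supported $f^- \leq f \leq f^+$ with $\int_{\R^{d+1}}(f^+ - f^-) < \varepsilon$; monotonicity of the Siegel transform together with Siegel's formula $\int_{X_{d+1}} \widehat{h}\,\wrt\mu = \int_{\R^{d+1}} h$ sandwich the desired limit between two quantities differing by $\varepsilon$, so it suffices to prove the result for continuous compactly supported $f$.

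Second, for a test function $\varphi \in C_c(X_{d+1})$ I would invoke the equidistribution
$$\int_{K_{d+1}} \varphi(g_t k \La)\,\wrt k \;\longrightarrow\; \int_{X_{d+1}} \varphi\,\wrt\mu,$$
which holds because $g_t$ is mixing on $X_{d+1}$ by Howe--Moore and because $K_{d+1}$ meets the stable horospherical subgroup of $g_t$ transversally, so an Eskin--McMullen / Shah-type thickening argument upgrades mixing to equidistribution of the expanding translates of the orbit $K_{d+1} \La$. The obstruction is that $\widehat{f}$ is not in $C_c(X_{d+1})$: it grows in the cusp. To address this, I truncate. Fix a continuous cut-off $\chi_R$ equal to $1$ on the Mahler compact set $\{\La' \in X_{d+1} : \lambda_1(\La') \geq 1/R\}$, where $\lambda_1(\cdot)$ denotes the length of the shortest nonzero lattice vector. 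The product $\chi_R \widehat{f}$ lies in $C_c(X_{d+1})$, so the equidistribution above applies and yields convergence to $\int_{X_{d+1}} \chi_R \widehat{f}\,\wrt\mu$, which in turn tends to $\int_{X_{d+1}} \widehat{f}\,\wrt\mu$ as $R \to \infty$ because $\widehat{f} \in L^1(X_{d+1})$ by Siegel's formula.

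The main obstacle is then the uniform tail estimate
$$\lim_{R\to\infty}\;\sup_{t\ge 0}\;\int_{K_{d+1}} \bigl(1-\chi_R\bigr)(g_t k \La)\,\widehat{f}(g_t k \La)\,\wrt k \;=\; 0.$$
Using that $f$ is bounded with compact support, I would dominate $\widehat{f}(\La')$ pointwise by $C\,\lambda_1(\La')^{-(d+1)}$ on the thin part of $X_{d+1}$, via Minkowski's second theorem applied to the lattice-point count $\#(\La' \cap \supp f)$. It remains to produce a $t$-independent $L^p$ bound for $\lambda_1(g_t k \La)^{-1}$ along the $K_{d+1}$-average. Such a bound is a spherical analogue of the Kleinbock--Margulis quantitative non-divergence estimates for one-parameter expansion of transversal submanifolds, exploiting the fact that $K_{d+1}$ is not contained in any proper parabolic stabilizing $g_t$. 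This is the technical heart of the argument and is the step most sensitive to the choice of $g_t$ and to the rank.
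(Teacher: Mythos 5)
Your proposal takes a genuinely different route from the paper. You outline the classical ``non-escape-of-mass'' strategy: truncate in the cusp, equidistribute the compactly supported piece via mixing and Eskin--McMullen thickening, and control the tail uniformly in $t$ by a quantitative non-divergence estimate. The paper instead gives an elementary geometric proof of the upper bound (Theorem~\ref{theorem:siegel:equidistUpper}): it reduces $f$ to characteristic functions of rods $E$, observes that each summand of the Siegel transform contributes a spherical-cap integral $A_R^E(\|\boldsymbol v\|) = \int_{K}\chi_{k^{-1}g_t^{-1}E}(\boldsymbol v)\,\wrt k$ depending only on $\|\boldsymbol v\|$, and bounds $\sum_{\boldsymbol v\in\La\setminus\{\boldsymbol 0\}}A_R^E(\|\boldsymbol v\|)$ by a Riemann--Stieltjes integral against the lattice-point count in Euclidean balls; as $t\to\infty$ the cap flattens and the total converges to $\vol_{\RR^d}(E)$, with the lower bound supplied separately by Duke--Rudnick--Sarnak. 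This avoids any effective cusp estimate. Indeed the authors explicitly remark that the Kleinbock--Margulis machinery you invoke is, as far as they know, only available for one-parameter diagonal flows, and their geometric argument is designed precisely so that it extends to the multiparameter $g_t^{(\boldsymbol r,\boldsymbol s)}$ that is the real point of the paper; your route would, at best, reprove the one-parameter statement.

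Even for the one-parameter case there are two gaps. The uniform-in-$t$ $L^p$ bound on $\lambda_1(g_t k\La)^{-1}$ along the $K$-average, which you rightly call the technical heart, is asserted rather than proven; it is exactly the content of the Kleinbock--Margulis appendix estimates, and describing it as an expected ``spherical analogue'' leaves the core of the argument open. More seriously, the dominating function $\lambda_1(\La')^{-(d+1)}$ is too crude: it is not in $L^1(X_{d+1})$ (already in $X_2$ one has $\mu(\{\lambda_1<\epsilon\})\asymp\epsilon^2$, so $\lambda_1^{-2}\notin L^1$), hence the truncation step cannot close with this bound even granting the non-divergence input. The bound Minkowski's second theorem actually yields for $\#(\La'\cap\supp f)$ involves all successive minima, $\prod_i\bigl(1+r/\lambda_i(\La')\bigr)$, and the correct majorant is the refined cusp function $\alpha(\La')=\max_k\prod_{i\le k}\lambda_i(\La')^{-1}$, which does lie in $L^1$ (consistently with Siegel's formula) and is what the Eskin--Margulis--Mozes and Kleinbock--Margulis arguments are built around. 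Replacing your $\lambda_1^{-(d+1)}$ by $\alpha$ and supplying the spherical non-divergence estimate would make your plan sound for the one-parameter flow, but it would remain a different argument from the paper's and would not cover the higher-rank case.
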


\subsection{Multiplicative and weighted variants}
Dirichlet's theorem lends itself to several interesting generalisations. Here is a \emph{multiplicative} analogue which can be proved using either Dirichlet's original approach or Minkowski's geometry of numbers. With notation as above, there exist integers $q_1,\dots, q_m, p_1,\dots, p_n$ such that  

\begin{equation}\label{md0}
 \left(\prod_{1 \leq j \leq m}\max\{1,|q_j|\}\right)^{1/m} \leq Q
\end{equation}
\noindent and
\begin{equation}\label{md1}
\left(\prod_{1 \leq i \leq n} |\alpha_{i1}q_1 + \dots + \alpha_{im}q_m - p_i|\right)^{1/n} \leq Q^{-m/n}.
\end{equation}

\noindent As a corollary, it follows that there are infinitely many $q_1,\dots, q_m$ such that
\begin{equation}\label{md2}
\left(\prod_{1 \leq i \leq n} |\alpha_{i1}q_1 + \dots + \alpha_{im}q_m - p_i|\right) \leq \left(\prod_{1 \leq j \leq m}\max\{1,|q_j|\}\right)^{-1}
\end{equation}
\noindent for some $p_1,\dots, p_n$.\\

\noindent The study of Diophantine inequalities using the multiplicative ``norm" as above instead of the supremum norm is referred to as \emph{multiplicative Diophantine approximation}. This subject is considered more difficult and is much less understood in comparison to its standard counterpart. For instance, arguably the most emblematic open problem in metric Diophantine approximation namely the Littlewood conjecture, is a problem in this genre. We refer the reader to the nice survey \cite{Bugeaud} by Bugeaud for an overview of the theory. There have been several important advances recently, several arising from applications of homogeneous dynamics to number theory. We mention the work of Kleinbock and Margulis \cite{KM} settling the Baker-Sprindzhuk conjecture as well as the work of Einsiedler-Katok-Lindenstrauss making dramatic progress towards Littlewood's conjecture.\\

Another variation of Diophantine approximation is developed as follows. Let $\alpha_{ij}, 1 \leq i \leq m, 1\leq j \leq n$ be real numbers and let ${\bf r} = (r_1, \dots, r_n) \in \R^n$ and ${\bf s} = (s_1,\dots, s_m) \in \R^m$ be probability vectors. Recall that a \textit{probability vector} has nonnegative real components, the sum of which is equal to $1$. Then a weighted version of Dirichlet's theorem states that there exist infinitely many integers $q_1,\dots, q_m$ such that
\begin{equation}\label{wd1}
\max_{1 \leq i \leq n} |\alpha_{i1}q_1 + \dots + \alpha_{im}q_m - p_i|^{1/r_i} \leq \max_{1 \leq j \leq m}|q_j|^{1/s_j}
\end{equation}
\noindent for some $p_1,\dots, p_n$. The subject of \emph{weighted} Diophantine approximation has also witnessed significant progress of late. We refer the reader to the works of Kleinbock and Weiss,  \cite{KW10, KW14} as well as the resolution of Schmidt's conjecture on weighted badly approximable vectors due to Badziahin-Pollington-Velani \cite{BDV}.\\ 

\subsection{Spiraling}
\medskip
\noindent  In this paper, we study the distribution of approximates in the multiplicative setting as well as the setting of Diophantine approximation with weights. Again, as far as we are aware, these are the first results of their kind. While our strategy remains the same as in \cite{AGT1}, our main tool, an equidistribution theorem for Siegel transforms on homogeneous spaces (Theorem \ref{theorem:siegel:equidist}) is new and new inputs are required for the proof. Equidistribution results of this kind have found many applications (cf. \cite{KM}, \cite{EMM}, \cite{MS1, MS2}) in number theory. We hope our result will be of interest to both dynamicists as well as number theorists.

\subsection{The setup}
Let $\ell\geq1$ be an integer.  Define functions $\RR^\ell \rightarrow \RR_{\geq 0}$ as follows:  \begin{align*}
\|\boldsymbol{v}\|_{\boldsymbol{p}} := \max_{i=1, \cdots, \ell} |v_i|^{1/p_i} \quad \textrm{ and  } \quad \|\boldsymbol{v}\|_{\pr} := \prod_{i=1}^\ell |v_i|
  \end{align*} where $\boldsymbol{p} \in \RR^\ell$ is a probability vector. Let $m,n \geq 1$ be integers and $d:=m+n$.  Let $e_1, \cdots, e_m$ be the standard basis for $\RR^m$ and $e_1, \cdots, e_d$ be the standard basis for $\RR^m \times \RR^n = \RR^d$.  Fix probability vectors $\boldsymbol{r} \in \RR^m$ and $\boldsymbol{s} \in \RR^n$, these vectors are also referred to as \textit{weights} in the literature.  Let 
 \[g^{(\boldsymbol{r})}_t :=\diag(e^{r_1t}, \cdots ,e^{r_m t})  \in \GL_{m}(\bb R),\] 
 \noindent and let $\bb S^{m-1}$ denote the $m-1$ dimensional unit sphere centered at the origin.  For a subset $\widetilde{A}$ of $\bb S^{m-1}$, the union of all rays in $\RR^m$ through each point of $\widetilde{A}$ is called the \textit{cone in $\RR^m$ through $\widetilde{A}$} and denoted by $\Co \widetilde{A}$.  The region of interest for Diophantine approximation with weights is \[R:= R^{(\boldsymbol{r}, \boldsymbol{s})}:=\bigg{\{}\boldsymbol{v}=\begin{pmatrix} \boldsymbol{v}_1 \\ \boldsymbol{v}_2 \end{pmatrix} \in \RR^m \times \RR^n : 0<\|\boldsymbol{v}_1\|_{\boldsymbol{r} } \|\boldsymbol{v}_2\|_{\boldsymbol{s}} \leq 1 \bigg{\}}.\]  Fix an $0<\epsilon<1,$ $T >0$, and a subset $A$ of $\bb S^{m-1}$ with zero measure boundary.  The subsets that concern us, in particular, are \[R_{\epsilon, T} := \left\{ \boldsymbol{v} \in  R~:~ \epsilon T \le \|\boldsymbol{v}_2\|_{\boldsymbol{s}} \le T \right\} \quad \textrm{ and } \quad R_{A, \epsilon, T} := \left\{ \boldsymbol{v}  \in R_{\epsilon, T}~:~ \boldsymbol{v}_1\in g^{(\boldsymbol{r})}_{-\log(T)} (\Co A) \right\}.\]  The subset  $R_{\epsilon, T}$ is analogous to the subset above which played a role in~\cite{AGT1}.  Indeed if we consider the special case of $\boldsymbol{r}$ equal to $(1/m, \cdots, 1/m)$, then the set $R_{A, \epsilon, T}$ is equal to $\left\{ \boldsymbol{v}  \in R_{\epsilon, T}~:~ \frac{\boldsymbol{v}_1}{\|\boldsymbol{v}_1\|_{2} } \in A \right\}$, which was considered in~\cite{AGT1}. The reason that our formulation in terms of cones is the appropriate generalization is as follows.  Let us again consider an arbitrary $\boldsymbol{r}$.  Consider the slices of $R$ given by the equations \[\|\boldsymbol{v}_1\|_{\boldsymbol{r} } = 1/p\] for a real number $p>1$.  To map the slice given by $p$ to the one given by $p'\geq p$, apply the contracting (and, in general, nonuniformly contracting) automorphism $g_{\log(p)-\log(p')}^{(\boldsymbol{r})}$ to the slice.  Now $g_{-t}^{(\boldsymbol{r})}$ takes $\bb S^{m-1}$ into ellipsoids, whose eccentricities are increasing as $t$ increases.  It is reasonable that the distribution of directions respects the action of $g_{-t}^{(\boldsymbol{r})}$---that this holds is the content of our result, Theorem~\ref{thmWeightedDASphereAve}.\\  

\noindent The regions of interest for multiplicative Diophantine approximation are \[P:=\bigg{\{}\boldsymbol{v}=\begin{pmatrix} \boldsymbol{v}_1 \\ \boldsymbol{v}_2 \end{pmatrix} \in \RR^m \times \RR^n : 0<\|\boldsymbol{v}_1\|_{\pr } \|\boldsymbol{v}_2\|_{\pr} \leq 1 \bigg{\}},\] \[P_{\epsilon, T} := \left\{ \boldsymbol{v} \in  P~:~ \epsilon T \le \|\boldsymbol{v}_2\|_{\pr} \le T \right\} \quad \textrm{ and } \quad P_{A, \epsilon, T} := \left\{ \boldsymbol{v}  \in P_{\epsilon, T}~:~ \boldsymbol{v}_1 \in g^{(\boldsymbol{r})}_{-\log(T)} (\Co A)  \right\}.\]  The region $P$ is sometimes referred to as a \textit{star body}.  For the special case of $\boldsymbol{r}$ equal to $(1/m, \cdots, 1/m)$, the set $P_{A, \epsilon, T}$ is equal to $\left\{ \boldsymbol{v}  \in P_{\epsilon, T}~:~ \frac{\boldsymbol{v}_1}{\|\boldsymbol{v}_1\|_{2} } \in A \right\}.$  Now, unlike for Diophantine approximation with weights, the $m$-volume of $P_{1,1}$ is infinite.  Let $\Pl_i$ denote the coordinate codimension-one hyperplane in $\RR^m$ normal to $e_i$.  Then \[\Pl_i \cap \s^{m-1} =: \s_i\] are \textit{great spheres of $\s^{m-1}$}; namely, $\s_i = k_i \s^{m-2}$ for some $k_i \in \SO_m(\RR)$.  For any $\delta>0$, let \[\s^{(\delta)}_i := \Pl_i \times [-\delta, \delta] \cap \s^{m-1}\] denote the $\delta$-thickening of $\s_i$ on $\s^{m-1}$.  By elementary calculus, it is easy to see that the $\Pl_i$ point in the directions in which $P_{1,1}$ has regions with infinite volume (see also the Appendix).  Radially projecting $P_{1,1}$ onto \[\Sp:=\Sp(\delta) := {\bb S}^{m-1} \backslash \cup_{i=1}^m \s^{(\delta)}_i\] it is easy to see that $\Co \Sp \cap P_{1,1}$ has finite $m$-volume for every $\delta>0$.  We also note that the  $g_{-t}^{(\boldsymbol{r})}$-action contracts slices of $P$ in the same way as it does $R$ and that it preserves each of the coordinate planes: $g_{-t}^{(\boldsymbol{r})}(\Pl_i) = \Pl_i$, and consequently, the action of $g_{-t}^{(\boldsymbol{r})}$ on $\Co \Sp \cap P_{1,1}$ keeps the $m$-volume finite.  By continuity in $t$, the $m$-volume of slices between $\epsilon T\leq t\leq T$ has a maximum for all fixed $1\geq \epsilon >0$ and $T>0$, and Riemann-integration implies that \begin{align}\label{eqnFiniteVolSMAD}
 \vol_{\RR^d}(P_{\Sp, \epsilon, T})< \infty.
\end{align}  For Theorem~\ref{thmMultiDASphereAve} below, we will only consider the sets \[P_{\Sp, \epsilon, T} \quad \textrm{ and } \quad P_{A, \epsilon, T}\] for $A$ with zero measure boundary contained in $\Sp(\delta)$ for some $\delta >0$.  For Theorem~\ref{thmMultiDASphereAveCuspSet}, we will consider some sets outside of $\Sp$.


\subsection{Statement of results for lattice approximates}

Let $\wrt k$ denote the probability Haar measure on $K := K_d := \SO_d(\RR)$.  Our main number-theoretic results are three averaged spiraling of lattice approximates results, one for approximation in the setting of Diophantine approximation with weights and two in the setting of multiplicative Diophantine approximation.  We point out that our proof of Theorems~\ref{thmWeightedDASphereAve} and~\ref{thmMultiDASphereAve} gives that the equality of the numerator and the equality of the denominator hold independently.  One consequence is that other ratios may be obtained.  

\begin{theo}\label{thmWeightedDASphereAve} For every unimodular lattice $\Lambda \in X_d$, subset $A \subset \s^{m-1}$ with zero measure boundary, and $\epsilon >0$, we have that \[ \lim_{T \to \infty} \frac{\int_{K}\#\{k \Lambda \cap R_{A, \epsilon, T}\}~\wrt k}{\int_{K}  \#\{k \Lambda \cap R_{\epsilon, T}\}~\wrt k}  = \frac{\vol_{\RR^d}(R_{A, \epsilon,1})}{\vol_{\RR^d}(R_{\epsilon,1})} \]
 
\end{theo}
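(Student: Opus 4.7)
The plan is to recast the counts $\#\{k\Lambda \cap R_{A,\epsilon,T}\}$ and $\#\{k\Lambda \cap R_{\epsilon,T}\}$ as Siegel transforms of characteristic functions of \emph{bounded} sets evaluated along a weighted diagonal flow, apply Theorem~\ref{theorem:siegel:equidistUpper}, and then identify the resulting limits via Siegel's mean-value formula. The required flow on $X_d$ is the one compatible with the weights $(\boldsymbol{r},\boldsymbol{s})$:
\[
a_t := \diag\bigl(e^{r_1 t}, \ldots, e^{r_m t},\, e^{-s_1 t}, \ldots, e^{-s_n t}\bigr),
\]
which lies in $\SL_d(\RR)$ because $\boldsymbol{r}$ and $\boldsymbol{s}$ are probability vectors. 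A direct computation gives $\|a_t \boldsymbol{v}_1\|_{\boldsymbol{r}} = e^{t}\|\boldsymbol{v}_1\|_{\boldsymbol{r}}$ and $\|a_t \boldsymbol{v}_2\|_{\boldsymbol{s}} = e^{-t}\|\boldsymbol{v}_2\|_{\boldsymbol{s}}$, so $a_t$ preserves $R$; moreover, since $a_t|_{\RR^m} = g_t^{(\boldsymbol{r})}$, we have $a_{\log T}(R_{A,\epsilon,T}) = R_{A,\epsilon,1}$ and $a_{\log T}(R_{\epsilon,T}) = R_{\epsilon,1}$. Hence
\[
\#\{k\Lambda \cap R_{A,\epsilon,T}\} \;=\; \#\{a_{\log T}k\Lambda \cap R_{A,\epsilon,1}\} \;=\; \widehat{\chi_{R_{A,\epsilon,1}}}\bigl(a_{\log T} k\Lambda\bigr),
\]
and the same relation holds for the denominator.

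The set $R_{\epsilon,1}$ is bounded (because $\|\boldsymbol{v}_2\|_{\boldsymbol{s}}\le 1$ forces $\|\boldsymbol{v}_1\|_{\boldsymbol{r}}\le 1/\epsilon$) and is bounded away from the origin, so $\chi_{R_{A,\epsilon,1}}$ and $\chi_{R_{\epsilon,1}}$ are compactly supported. The assumption that $\partial A$ has zero measure on $\s^{m-1}$, combined with the piecewise smoothness of the level sets of $\|\cdot\|_{\boldsymbol{r}}$ and $\|\cdot\|_{\boldsymbol{s}}$, makes both functions Riemann-integrable on $\RR^d$. Applying Theorem~\ref{theorem:siegel:equidistUpper} along the flow $a_t$ then yields
\[
\int_{K}\widehat{\chi_{R_{A,\epsilon,1}}}(a_{\log T} k\Lambda)\,\wrt k \;\xrightarrow[T\to\infty]{}\; \int_{X_d}\widehat{\chi_{R_{A,\epsilon,1}}}\,\wrt\mu \;=\; \vol_{\RR^d}(R_{A,\epsilon,1}),
\]
the last equality being Siegel's mean-value formula; the identical argument applied to the denominator (which converges independently, as noted after the statement) and division yields the claim.

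The main obstacle is that Theorem~\ref{theorem:siegel:equidistUpper} is most naturally formulated for continuous compactly supported test functions, whereas $\chi_{R_{A,\epsilon,1}}$ has jumps along its boundary. The standard remedy is to sandwich $\chi_{R_{A,\epsilon,1}}$ between continuous compactly supported functions $f^{\pm}_\eta$ which agree with it outside an $\eta$-neighborhood of $\partial R_{A,\epsilon,1}$, apply the averaging theorem to each $f^{\pm}_\eta$, let $T \to \infty$, and finally let $\eta \to 0$. Since $\partial R_{A,\epsilon,1}$ is a finite union of smooth hypersurfaces together with the cone over $\partial A$, it has Lebesgue measure zero in $\RR^d$, so $\int_{\RR^d}(f^{+}_\eta - f^{-}_\eta)\,\wrt\boldsymbol{v}\to 0$ as $\eta\to 0$. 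Schmidt's standard $L^1$-bounds for Siegel transforms of indicator functions of bounded sets on $X_d$ (valid because $d = m+n \ge 2$) then provide the uniform control needed to interchange the $T\to\infty$ and $\eta\to 0$ limits, completing the reduction.
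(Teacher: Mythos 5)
Your overall strategy is the one the paper uses: observe that the flow $a_t = g_t^{(\boldsymbol{r},\boldsymbol{s})}$ preserves $R$ and maps $R_{A,\epsilon,T}$ to $R_{A,\epsilon,1}$, rewrite the counts as Siegel transforms evaluated along $a_t k\Lambda$, apply the spherical-averaging theorem, identify the limit via Siegel's mean value formula, and divide. However, there are two problems with the way you have written it up.

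First, you invoke Theorem~\ref{theorem:siegel:equidistUpper}, which gives only the inequality $\limsup \le \int \widehat f\,\wrt\mu$, but you then assert a two-sided limit. What you need --- and what the paper actually uses --- is Theorem~\ref{theorem:siegel:equidist}, the full equidistribution statement for bounded Riemann-integrable functions of compact support. That theorem applies directly to $\widehat{{\bf 1}}_{R_{A,\epsilon}}$ and $\widehat{{\bf 1}}_{R_{\epsilon}}$ once one notes their sets of discontinuity have Lebesgue measure zero (which your observation about $\partial A$ and piecewise-smooth level sets handles), and no further approximation is required.

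Second, the ``main obstacle'' you describe is not an obstacle: Theorems~\ref{theorem:siegel:equidist} and~\ref{theorem:siegel:equidistUpper} are stated precisely for bounded Riemann-integrable (respectively bounded, compactly supported with null discontinuity set) functions, not merely continuous ones, so the sandwiching by $f^{\pm}_{\eta}\in C_c(\RR^d)$ is unnecessary. Moreover, as written, that sandwiching has a real gap: $\widehat{f^{\pm}_{\eta}}$ is \emph{not} in $C_c(X_d)$ --- the Siegel transform of a nonnegative $C_c(\RR^d)$ function is unbounded near the cusp of $X_d$ --- so you cannot feed it into the DRS/Eskin--McMullen/Shah equidistribution theorem for $C_c(X_d)$ test functions without a further truncation of $\widehat{f^{\pm}_{\eta}}$ near the cusp. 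This is exactly the difficulty that Theorem~\ref{theorem:siegel:equidistUpper} is designed to resolve (it supplies the upper bound; the lower bound comes from approximating $\widehat f$ itself from below by $h\in C_c(X_d)$ and applying Theorem~\ref{theorem:DRS}). Likewise, the appeal to Schmidt-type $L^1$ bounds to ``interchange limits'' is not needed if the limits are taken in the order $T\to\infty$ then $\eta\to 0$, and in any case an $L^1(X_d)$ bound on $\widehat{\chi}$ does not by itself control the pointwise spherical averages $\int_K\widehat{\chi}(a_{\log T}k\Lambda)\,\wrt k$ uniformly in $T$. Delete the last paragraph, replace the citation by Theorem~\ref{theorem:siegel:equidist}, and the argument matches the paper's.
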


\begin{rema}
The special case of setting $\boldsymbol{r}$ equal to $(1/m, \cdots, 1/m)$ is, itself, already a generalization of~\cite[Theorem~1.4]{AGT1}, except, since the function $\|\cdot\|_{(1/m, \cdots, 1/m)}$ is (a power of) the sup norm, instead of the Euclidean norm of ~\cite[Theorem~1.4]{AGT1}.  Here, we obtain that the limit of the ratio is \[\frac{\vol_{\RR^m}(R_{1,1} \cap \Co A)}{\vol_{\RR^m}(R_{1,1})},\]  where $\vol_{\RR^m}(R_{1,1}) = 2^m$.  Note, as mentioned, the sets $R_{A, \epsilon, T}$ for the special case reduce to their counterparts in~\cite{AGT1}.  

To obtain the exact generalization of~\cite[Theorem~1.4]{AGT1}, replace the function $\|\cdot\|_{(1/m, \cdots, 1/m)}$ by the Euclidean norm.  Then the proof of the theorem will also give this generalization and the conclusion is that the limit of the ratios is $\vol_{{\bb S}^{m-1}}(A)$.  Note that, in all cases, the function $\|\cdot\|_{\boldsymbol{s}}$ can be for an arbitrary probability $n$-vector $\boldsymbol{s}$.  We now state our results in the setting of multiplicative Diophantine approximation.

\end{rema}

\begin{theo}\label{thmMultiDASphereAve} For every unimodular lattice $\Lambda \in X_d$, $\delta >0$, subset $A \subset \Sp(\delta)=:\Sp$ with zero measure boundary, and $\epsilon >0$, we have that \[ \lim_{T \to \infty} \frac{\int_{K}\#\{k \Lambda \cap P_{A, \epsilon, T}\}~\wrt k}{\int_{K}  \#\{k \Lambda \cap P_{\Sp,\epsilon, T}\}~\wrt k}  = \frac{\vol_{\RR^d}(P_{A, \epsilon,1})}{\vol_{\RR^d}(P_{\Sp,\epsilon,1})} \]

\end{theo}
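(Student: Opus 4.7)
The plan is to convert the lattice counts into Siegel transforms evaluated along a one-parameter diagonal flow, and then apply the averaging theorem~\ref{theorem:siegel:equidistUpper}. Define
\[
a_T := \diag\bigl(T^{r_1}, \ldots, T^{r_m}, T^{-s_1}, \ldots, T^{-s_n}\bigr) \in \SL_d(\R),
\]
which lies in $\SL_d(\R)$ because $\boldsymbol r$ and $\boldsymbol s$ are probability vectors. From the identity $\|g^{(\boldsymbol r)}_{\log T}\boldsymbol v_1\|_{\pr} = T\|\boldsymbol v_1\|_{\pr}$ and its $\R^n$-analogue one checks that $a_T$ carries $P_{\epsilon, T}$ bijectively onto $P_{\epsilon, 1}$, while the twisted cone condition $\boldsymbol v_1 \in g^{(\boldsymbol r)}_{-\log T}(\Co A)$ becomes $\boldsymbol w_1 \in \Co A$, so that $a_T(P_{A, \epsilon, T}) = P_{A, \epsilon, 1}$ and $a_T(P_{\Sp, \epsilon, T}) = P_{\Sp, \epsilon, 1}$. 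Setting $f_A := \chi_{P_{A, \epsilon, 1}}$ and $f_\Sp := \chi_{P_{\Sp, \epsilon, 1}}$, it follows that
\[
\#\{k\Lambda \cap P_{A, \epsilon, T}\} = \widehat{f_A}(a_T k \Lambda), \qquad \#\{k\Lambda \cap P_{\Sp, \epsilon, T}\} = \widehat{f_\Sp}(a_T k \Lambda).
\]

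Both $f_A$ and $f_\Sp$ are bounded, Riemann-integrable (their boundaries are finite unions of smooth level sets together with the zero-measure sets inherited from $\partial A$ and from the great spheres~$\s_i$), and have finite Lebesgue integral by~\eqref{eqnFiniteVolSMAD}. The hypothesis $A \subset \Sp(\delta)$ forces $\boldsymbol v_1$ into a compact subset of $\R^m$: the lower bound $|v_{1,i}| \geq \delta \|\boldsymbol v_1\|_2$ for every $i$ gives $\|\boldsymbol v_1\|_{\pr} \geq \delta^m \|\boldsymbol v_1\|_2^m$, and combined with $\|\boldsymbol v_1\|_{\pr}\|\boldsymbol v_2\|_{\pr} \leq 1$ and $\|\boldsymbol v_2\|_{\pr} \geq \epsilon$ this bounds $\|\boldsymbol v_1\|_2$. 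However $\boldsymbol v_2$ can still escape along coordinate hyperplanes in $\R^n$, so the supports of $f_A$ and $f_\Sp$ are not compact.

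Theorem~\ref{theorem:siegel:equidistUpper} applied to $f_A$ and to $f_\Sp$ supplies the upper bounds
\[
\limsup_{T \to \infty} \int_K \widehat{f_A}(a_T k \Lambda)\wrt k \leq \int_{X_d}\widehat{f_A}\wrt\mu = \vol_{\R^d}(P_{A, \epsilon, 1}),
\]
the final equality being Siegel's integration formula, and analogously for $f_\Sp$. The matching lower bound I would obtain by exhaustion: truncate to $P_{A, \epsilon, 1} \cap B_\rho$, where $B_\rho$ is a closed Euclidean ball of radius $\rho$. The truncation is compactly supported, bounded, and Riemann-integrable, so the \cite{AGT1}-style Theorem~\ref{theorem:siegel:equidist} applies and yields
\[
\liminf_{T \to \infty}\int_K \widehat{f_A}(a_T k \Lambda)\wrt k \geq \vol_{\R^d}(P_{A, \epsilon, 1} \cap B_\rho),
\]
which tends to $\vol_{\R^d}(P_{A, \epsilon, 1})$ as $\rho \to \infty$ by monotone convergence. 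The same reasoning applies to $f_\Sp$, so the numerator and denominator converge independently to $\vol_{\R^d}(P_{A, \epsilon, 1})$ and $\vol_{\R^d}(P_{\Sp, \epsilon, 1})$ respectively, proving the claimed limit and confirming the remark that the two equalities hold independently.

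The principal obstacle is the non-compactness of the supports of $f_A$ and $f_\Sp$: because the multiplicative norm does not control individual coordinates, $P_{A, \epsilon, 1}$ extends to infinity along coordinate hyperplanes in $\R^n$. This is precisely why one needs the stronger averaging result~\ref{theorem:siegel:equidistUpper} — valid for $L^1$ but non-compactly supported Riemann-integrable $f$ — beyond the compactly supported Theorem~\ref{theorem:siegel:equidist} that sufficed in~\cite{AGT1}, and it is also the reason for introducing the great-sphere thickening $\Sp(\delta)$, which controls the $\R^m$-side by keeping the lattice points away from the cusps of the star body.
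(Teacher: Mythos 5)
Your proof is correct and follows essentially the same route as the paper: convert the counts to Siegel transforms of $\chi_{P_{A,\epsilon,1}}$ and $\chi_{P_{\Sp,\epsilon,1}}$ evaluated along $g^{(\boldsymbol r,\boldsymbol s)}_{\log T}kK$, apply the averaging theorem for Siegel transforms, and conclude with Siegel's mean value formula and finiteness of $\vol(P_{\Sp,\epsilon})$ from~\eqref{eqnFiniteVolSMAD}. You are in fact more careful than the paper's one-line invocation of Theorem~\ref{theorem:siegel:equidist}: you correctly flag that the indicators are \emph{not} compactly supported in the multiplicative case (the star body runs off to infinity along coordinate hyperplanes in the $\R^n$ factor even though $\Sp(\delta)$ confines the $\R^m$ component), so that the upper bound requires the $L^1$ extension stated in the remark after Theorem~\ref{theorem:siegel:equidistUpper}, while the matching lower bound is recovered by truncating to compact balls, applying the compactly-supported version, and passing to the limit by monotone convergence — this is exactly the import of the paper's suggestion to approximate $\widehat f$ from below, just spelled out cleanly.
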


\begin{theo}\label{thmMultiDASphereAveCuspSet}  For every unimodular lattice $\Lambda \in X_d$ and open subset $A \subset \s^{m-1}$ such that \[A \cap (\cup_{i=1}^m \s_i) \neq \emptyset,\] we have that \[\lim_{T \to \infty} \int_{K}\#\{k \Lambda \cap P_{A, \epsilon, T}\}~\wrt k = \infty.\]
 
\end{theo}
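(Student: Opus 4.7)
The proof splits into three steps: show $P_{A,\epsilon,1}$ has infinite $d$-volume, use a covariant diagonal flow to turn the counting integral into a Siegel-transform average at time $\log T$, and pass to the limit via compact exhaustion together with the paper's main averaging result.

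For the infinite-volume step, the hypothesis provides a point $a \in A \cap \s_i$; by relabeling assume $i=1$. Since $A$ is open in $\s^{m-1}$, it contains a spherical neighborhood $U$ of $a$, which we may shrink to avoid all other great spheres $\s_j$. In polar coordinates $v_1 = r\omega$ on $\RR^m$, for a fixed $v_2$ with $\|v_2\|_{\pr} = t \in [\epsilon,1]$ the constraint $\|v_1\|_{\pr} \le 1/t$ reads $r^m|\omega_1\cdots\omega_m| \le 1/t$, so that
\[
\vol_{\RR^m}\!\left(\{v_1 \in \Co U : \|v_1\|_{\pr}\le 1/t\}\right) = \frac{1}{mt}\int_U \frac{d\sigma(\omega)}{|\omega_1\cdots\omega_m|},
\]
where $d\sigma$ is the spherical measure. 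The integrand has a non-integrable $1/|\omega_1|$ singularity near $a$ (since $\omega_1 \to 0$ there while the other $|\omega_j|$ stay bounded away from zero on $U$), so the slice volume is infinite; Fubini then yields $\vol_{\RR^d}(P_{A,\epsilon,1}) = \infty$.

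For the covariance step, let $h_t := \diag(e^{r_1 t},\ldots,e^{r_m t},e^{-s_1 t},\ldots,e^{-s_n t}) \in \SL_d(\RR)$; this is the multi-parameter diagonal flow underlying Theorem~\ref{theorem:siegel:equidistUpper}. Because $\boldsymbol{r}$ and $\boldsymbol{s}$ are probability vectors, $h_t$ has determinant one, preserves $P$, and satisfies $\|h_t v_2\|_{\pr} = e^{-t}\|v_2\|_{\pr}$, while acting on $v_1$ by $g^{(\boldsymbol{r})}_t$. A direct comparison with the definitions gives $h_{-\log T}(P_{A,\epsilon,1}) = P_{A,\epsilon,T}$, so
\[
\int_K \#\{k\Lambda \cap P_{A,\epsilon,T}\}\,\wrt k \;=\; \int_K \widehat{\mathbbm{1}}_{P_{A,\epsilon,1}}(h_{\log T}\, k\Lambda)\,\wrt k.
\]

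Finally, fix any $M>0$. By the first step we may choose a compact Jordan-measurable subset $E_M \subset P_{A,\epsilon,1}$ with $\vol_{\RR^d}(E_M) > M$; its indicator is bounded, Riemann-integrable, and compactly supported, so Theorem~\ref{theorem:siegel:equidistUpper} applied along $h_t$ gives
\[
\lim_{T\to\infty}\int_K \widehat{\mathbbm{1}}_{E_M}(h_{\log T}\, k\Lambda)\,\wrt k = \vol_{\RR^d}(E_M) > M.
\]
Combined with the pointwise inequality $\widehat{\mathbbm{1}}_{E_M} \le \widehat{\mathbbm{1}}_{P_{A,\epsilon,1}}$ and the covariance identity of the previous step, this yields $\liminf_{T\to\infty}\int_K \#\{k\Lambda \cap P_{A,\epsilon,T}\}\,\wrt k \ge M$; letting $M \to \infty$ completes the proof. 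The principal obstacle is isolating the precise mechanism by which the meeting of $A$ with a coordinate great sphere forces divergence, i.e.\ identifying the $1/|\omega_1|$ singularity in step (i); the rest is routine once the general averaging theorem is available.
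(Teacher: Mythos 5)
Your argument is correct and follows the same strategy as the paper's proof: approximate the infinite-volume region $P_{A,\epsilon,1}$ from below by compactly supported, finite-volume pieces, apply the Siegel-transform averaging theorem to each piece to convert the counting integral into a volume, and conclude divergence by letting the pieces exhaust the region. The only substantive difference is bookkeeping: the paper truncates on the sphere, replacing $A$ by $A \cap \Sp(\delta_i)$ so that $P_{A \cap \Sp(\delta_i),\epsilon}$ is automatically finite-volume, and then invokes Lemma~\ref{lemmInfiniteVolCusp} (infinite volume of $\Co A \cap S$ in $\RR^m$) to show these volumes diverge; you instead take an abstract compact Jordan-measurable exhaustion of $P_{A,\epsilon,1}$ in $\RR^d$ and establish the infinite $d$-volume inline via a polar-coordinate/Fubini computation exhibiting the non-integrable $1/|\omega_1|$ singularity. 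Both routes isolate the same mechanism, and your inline computation is essentially a proof of Lemma~\ref{lemmInfiniteVolCusp}.

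One small correction: in the final step you cite Theorem~\ref{theorem:siegel:equidistUpper}, which only furnishes the inequality $\limsup \le \int \widehat{f}\,\wrt\mu$; to conclude that the averages over $E_M$ converge to $\vol_{\RR^d}(E_M)$, and hence that the counting integral is eventually $> M$, you need the full equidistribution statement, Theorem~\ref{theorem:siegel:equidist}. Since $\widehat{\mathbbm{1}}_{E_M}$ is bounded, Riemann-integrable, and compactly supported, that theorem applies, so the slip is purely a citation error and does not affect the logic.
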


\noindent Theorem~\ref{thmMultiDASphereAveCuspSet} tells us that on average there are arbitrarily small neighborhoods of directions (which we know explicitly) for which every unimodular lattice has infinitely many elements in our star body. To prove these theorems, we need our main ergodic result on equidistribution of Siegel translates, Theorem~\ref{theorem:siegel:equidist}. We note that the spiraling results for multiplicative and weighted Diophantine approximation follow by applying the Theorems above to the unimodular lattice 
$$\begin{pmatrix}\Id_{m \times m} & \alpha\\0 & \Id_{n \times n} \end{pmatrix}\bb Z^{d}$$
\noindent attached to a matrix $\alpha = (\alpha_{ij})$ as usual.

\subsubsection*{Acknowledgements} Part of this work was completed during the Group Actions and Number theory (GAN)  programme at the Isaac Newton Institute for Mathematical Sciences. We thank the INI for providing a nice venue.

\section{Equidistribution on the space of lattices}\label{sec:lattices}


Given a unimodular lattice $\Lambda$ in ${\bb R}^{d}$ and a bounded Riemann-integrable function $f$ with compact support on $\bb R^{d}$, denote by $\widehat{f}$ its \emph{Siegel transform}\footnote{One could define the Siegel transform only over primitive lattice points, in which case results analogous to Theorems~\ref{theorem:siegel:equidist} and~\ref{theorem:siegel:equidistUpper} also hold (using, essentially, the same proof).}:

$$ \widehat{f}(\Lambda) := \sum_{\bf v \in \Lambda \backslash \{\boldsymbol{0}\}} f(\bf v).$$

\noindent Let $\mu = \mu_{d}$ be the probability measure on $X_{d} :=  \SL_{d}(\bb R)/\SL_{d}(\bb Z)$ induced by the Haar measure on $\SL_{d}(\R)$ and $\wrt {\bf v}$ denote the usual volume measure on ${\bb R}^{d}$. We recall the classical Siegel Mean Value Theorem \cite{Siegel}:

\begin{Theorem}
Let $f$ be as above.\footnote{This condition can be generalized to $f \in L^1(\bb R^{d})$.} Then $\widehat{f} \in L^{1}(X_{d}, \mu)$ and
$$ \int_{{\bb R}^{d}} f ~\wrt {\bf v} = \int_{X_{d}} \widehat{f} ~\wrt \mu.$$
\end{Theorem}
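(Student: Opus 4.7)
\noindent The plan is to use uniqueness of Haar measure on the homogeneous space $\R^d \setminus \{\mathbf{0}\}$. Define the linear functional $L : C_c(\R^d) \to \R$ by
\[ L(f) := \int_{X_d} \widehat{f}(\La) \, \wrt\mu(\La). \]
I would establish in turn: (i) $L$ is well-defined, i.e., $\widehat{f} \in L^1(X_d, \mu)$; (ii) $L$ is $\SL_d(\R)$-invariant; (iii) $L$ is a scalar multiple of the Lebesgue integral; (iv) the scalar equals $1$.

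\noindent For (i), I would bound $\widehat{f}$ pointwise on a Siegel fundamental domain for $\SL_d(\Z)$ in $\SL_d(\R)$. If $\supp f \subset B(\mathbf{0}, R)$ with $\|f\|_\infty \leq M$, then $\widehat{f}(\La) \leq M \cdot \#(\La \cap B(\mathbf{0}, R))$, and Minkowski's second theorem bounds this in terms of the successive minima $\lambda_1(\La) \leq \cdots \leq \lambda_d(\La)$; a standard reduction-theory estimate then yields $\widehat{f} \in L^1(X_d, \mu)$. For (ii), for $g \in \SL_d(\R)$ the substitution $\mathbf{v} \mapsto g^{-1}\mathbf{v}$ gives $\widehat{f \circ g^{-1}}(\La) = \widehat{f}(g^{-1}\La)$, and $\SL_d(\R)$-invariance of $\mu$ (under $\La \mapsto g\La$) yields $L(f \circ g^{-1}) = L(f)$.

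\noindent For (iii), Riesz representation produces a positive Radon measure $\nu$ on $\R^d$ with $L(f) = \int f \, \wrt\nu$. Since $\widehat{f}$ ignores $\mathbf{0}$, we have $\nu(\{\mathbf{0}\}) = 0$; by (ii), $\nu$ is $\SL_d(\R)$-invariant; and for $d \geq 2$, $\SL_d(\R)$ acts transitively on $\R^d \setminus \{\mathbf{0}\}$. Uniqueness of the invariant Radon measure on this homogeneous space forces $\nu = c \cdot \wrt\mathbf{v}$ for some constant $c > 0$. For (iv), I would use an unfolding identity: with $F$ a fundamental domain for $\Gamma := \SL_d(\Z)$ in $G := \SL_d(\R)$, Tonelli gives
\[ L(f) = \int_F \sum_{n \in \Z^d \setminus \{\mathbf{0}\}} f(g n) \, \wrt g, \]
and then the decomposition $\Z^d \setminus \{\mathbf{0}\} = \bigsqcup_{k \geq 1} k \cdot \{\text{primitive vectors}\}$, the fact that the primitive vectors form the single $\Gamma$-orbit of $e_1$, and the quotient integration formula on $G/\operatorname{Stab}_G(e_1)$ together identify the right-hand side with $\int_{\R^d} f \, \wrt\mathbf{v}$.

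\noindent The hardest step is (i): controlling $\widehat{f}$ as $\La$ enters the cusps of $X_d$ requires both the successive-minima estimate and volume bounds on the Siegel reduction domain. Once integrability is secured, (ii) is immediate from invariance of $\mu$, (iii) is a direct appeal to Haar uniqueness on a homogeneous space, and the unfolding in (iv) fixes the scalar.
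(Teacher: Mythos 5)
The paper does not supply a proof of this statement: it is the classical Siegel Mean Value Theorem, and the paper simply cites Siegel's 1945 paper for it, so there is no in-paper argument to compare against.

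Your sketch is correct in outline but slightly redundant, in that it stacks the two classical approaches on top of one another. Steps (ii)--(iii) form the invariance-plus-Haar-uniqueness argument (Weil's route), which on its own gives $L(f) = c\int_{\R^d} f\,\wrt\mathbf{v}$ for some $c>0$; step (iv), the unfolding over $\Z^d\setminus\{\mathbf{0}\} = \bigsqcup_{k\geq 1} k\cdot \Z^d_{\mathrm{prim}}$, is Siegel's original calculation and, carried out in full, produces $\int_{\R^d} f\,\wrt\mathbf{v}$ by itself (the $\zeta(d)$ coming from the sum over $k$ cancels the $\zeta(d)$ implicit in the covolume of $\SL_d(\Z)$ relative to the induced measure on $\SL_d(\R)/\mathrm{Stab}(e_1)$), which makes (iii) superfluous. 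If you prefer to keep (iii) and skip the normalization bookkeeping in (iv), the constant can be fixed more cheaply: take $f_R = \mathbf{1}_{B(\mathbf{0},R)}$, observe that $\widehat{f_R}(\La)/\vol(B(\mathbf{0},R)) \to 1$ pointwise as $R\to\infty$, and pass the limit through $\int_{X_d}\cdot\,\wrt\mu$ by dominated convergence --- the dominating function is $\prod_{i=1}^d \max(1, 1/\lambda_i(\La))$, whose integrability is exactly what step (i) establishes. You are right that (i) is the substantive technical step; the estimate $\widehat{\mathbf{1}}_{B(\mathbf{0},R)}(\La) \ll_d \prod_{i=1}^d \max\bigl(1, R/\lambda_i(\La)\bigr)$, Minkowski's second theorem, and integration over a Siegel set in Iwasawa coordinates is the standard route, and the resulting integral converges precisely because $d\geq 2$. (For $d=1$ the theorem is false --- $X_1$ is a point and $\widehat{f}(\Z)=\sum_{n\neq 0} f(n) \neq \int_\R f$ --- and the transitivity used in (iii) also needs $d\geq 2$; this is harmless here since $d = m+n \geq 2$ throughout the paper.)
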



\medskip 

\noindent  Note that if $f$ is the indicator function of a set $A \backslash \{\boldsymbol{0}\}$, then $\hat{f}(\Lambda)$ is simply the number of points in $\Lambda \cap (A \backslash \{\boldsymbol{0}\})$.  Let \[g_t:=g^{(\boldsymbol{r}, \boldsymbol{s})}_t :=\diag(e^{r_1t}, \cdots ,e^{r_m t},e^{-s_1t}, \cdots,e^{-s_n t})  \in \SL_{d}(\bb R)\] and $e_1, \cdots, e_{d}$ be the standard basis of $\bb R^{d}$.  We use $\widehat{{\bf 1}}_A$ to denote the indicator function of the set $A$.

\noindent Setting $t$ so that $e^{t} = T$ gives
$$g_t R_{\epsilon, T} = R_{\epsilon, 1} =: R_{\epsilon} \quad \textrm{ and } \quad g_t P_{\epsilon, T} = P_{\epsilon, 1} =: P_{\epsilon}$$
\noindent and
$$ g_t R_{A, \epsilon, T} = R_{A, \epsilon, 1} =: R_{A, \epsilon} \quad \textrm{ and } \quad g_t P_{A, \epsilon, T} = P_{A, \epsilon, 1} =: P_{A, \epsilon}.$$


\noindent Given a unimodular lattice $\Lambda \in \SL_d(\RR) / \SL_d(\ZZ)$,  a simple computation shows that 

\begin{equation}\label{eq:Sphere} \#\{k \Lambda \cap R_{\epsilon, T}\} = \widehat{{\bf 1}}_{R_{\epsilon}}(g_{t} k \Lambda) \quad \textrm{ and } \quad  \#\{k \Lambda \cap P_{\Sp, \epsilon, T}\} = \widehat{{\bf 1}}_{P_{\Sp, \epsilon}}(g_{t} k \Lambda) \end{equation}

\begin{equation}\label{eq:A} \#\{k \Lambda \cap R_{A, \epsilon, T}\} = \widehat{{\bf 1}}_{R_{A,\epsilon}}(g_{t} k \Lambda) \quad \textrm{ and } \quad  \#\{k \Lambda \cap P_{A,\epsilon, T}\} = \widehat{{\bf 1}}_{P_{A,\epsilon}}(g_{t} k \Lambda). \end{equation}

\subsection{Statement of results for Siegel transforms}\label{sec:siegel:equidist} To prove Theorems~\ref{thmWeightedDASphereAve} and~\ref{thmMultiDASphereAve}, we need to show the equidistribution of the Siegel transforms of the sets $R_{A, \epsilon}$, $P_{A, \epsilon}$, $R_{\epsilon}$, and $P_{\Sp,\epsilon}$ with respect to averages over $g^{(\boldsymbol{r}, \boldsymbol{s})}_t$-translates of $K$.  The main ergodic tool in this setting is our fourth main theorem, a result on the mutiparameter spherical averages of Siegel transforms:

\begin{Theorem}\label{theorem:siegel:equidist}
Let $f$ be a bounded Riemann-integrable function of compact support on $\R^{d}$.  Then for any $\La \in X_{d}$, $$ \lim_{t \to \infty} \int_{K_{d}} \widehat{f}(g^{(\boldsymbol{r}, \boldsymbol{s})}_t k \La) ~\wrt k = \int_{X_{d}}\widehat{f}~\wrt \mu.$$

\end{Theorem}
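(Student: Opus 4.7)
The strategy mirrors the proof of Theorem~\ref{AGT1} in~\cite{AGT1}: split the Siegel transform $\widehat f$ into a bulk part supported on a compact subset of $X_d$ and a cuspidal part, handle the bulk by combining a wavefront thickening of the $K$-orbit with mixing of $g_t$, and control the cuspidal part by a uniform tail estimate. The novelty relative to~\cite{AGT1} is that the general weights $\boldsymbol r, \boldsymbol s$ break the $K$-symmetry enjoyed by the uniform-weight diagonal flow of that paper.

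After sandwiching a Riemann-integrable $f$ between continuous compactly supported functions (the inequalities pass through both the Siegel transform and the $K$-integral), I would first prove the auxiliary statement that for every $\psi \in C_c(X_d)$ and $\La \in X_d$,
$$\int_K \psi(g_t k \La) \, \wrt k \longrightarrow \int_{X_d} \psi \, \wrt \mu \qquad (t \to \infty).$$
The plan is a wavefront/thickening argument. Let $P^{-}$ be the parabolic subgroup of $\SL_d(\R)$ whose Lie algebra is the direct sum of the Cartan subalgebra and those root spaces on which $\mathrm{Ad}\bigl(\diag(r_1,\dots,r_m,-s_1,\dots,-s_n)\bigr)$ acts with non-positive weight. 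Then $P^{-} K = \SL_d(\R)$, and $\mathrm{Ad}(g_t)$ is non-expanding on $\Lie(P^{-})$, so for a small neighborhood $H$ of the identity in $P^{-}$ the family $\{g_t H g_{-t} : t \ge 0\}$ remains bounded. Uniform continuity of $\psi$ together with this boundedness yields
$$\int_K \psi(g_t k \La) \, \wrt k \;\approx\; \frac{1}{m(H)} \int_K \int_H \psi(g_t h k \La) \, \wrt h \, \wrt k$$
uniformly in $t \ge 0$. Covering $K$ by finitely many small translates and recognizing $HK$ as a neighborhood of the identity in $\SL_d(\R)$, the right-hand side becomes, up to a controlled error, a finite sum of integrals of the form $\int_W \psi(g_t w \La) \, \wrt w$ over open sets $W \subset \SL_d(\R)$. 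Each such integral is a matrix coefficient of the regular representation that, by Howe--Moore mixing of $g_t$ on $L^{2}_{0}(X_d)$, converges as $t \to \infty$ to a constant multiple of $\int_{X_d} \psi \, \wrt \mu$; summing and renormalizing gives the claim.

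To finish, I would pass from compactly supported $\psi$ to the unbounded Siegel transform $\widehat f$ by truncation. For $L > 0$, fix a continuous cutoff $\chi_L$ on $X_d$ equal to $1$ where the shortest nonzero vector of the lattice has length at least $2/L$ and equal to $0$ where that length is at most $1/L$. Then $\widehat f \chi_L \in C_c(X_d)$, so the previous step gives $\int_K (\widehat f \chi_L)(g_t k \La) \, \wrt k \to \int_{X_d} \widehat f \chi_L \, \wrt \mu$, and Siegel's mean value theorem with dominated convergence gives $\int_{X_d} \widehat f (1 - \chi_L) \, \wrt \mu \to 0$ as $L \to \infty$. The remaining ingredient, which I expect to be the \emph{main technical obstacle}, is the uniform tail estimate
$$\limsup_{t \to \infty} \int_K \bigl(\widehat f (1-\chi_L)\bigr)(g_t k \La) \, \wrt k \longrightarrow 0 \qquad (L \to \infty).$$
In the uniform-weight setting of~\cite{AGT1} this is handled by the symmetry of the flow with respect to $K$; for arbitrary weights $\boldsymbol r, \boldsymbol s$ that symmetry is absent. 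My plan is to majorize $\widehat f(\La') \le C \cdot N_R(\La')$, where $N_R(\La')$ counts nonzero vectors of Euclidean length at most $R$ in $\La'$ and $B_R \supset \supp f$, and then to control $\int_K N_R(g_t k \La) \, \wrt k$ by combining $K$-transitivity on Euclidean spheres (which reduces the problem to counting vectors $v \in \La$ with $|v| \le R e^{t \max_j s_j}$ such that $g_t k v \in B_R$ for a non-negligible set of $k$) with a Rogers-type second-moment estimate on $X_d$ and a Kleinbock--Margulis quantitative non-divergence bound applied to the one-parameter family $\{g_t k \La : k \in K\}$ under the weighted flow. The shape of the deep cusp regions is governed by the parabolic $P^{-}$ above, so the weights enter only through this geometry.
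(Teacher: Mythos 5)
Your overall architecture — prove equidistribution of $K$-averages for $\psi\in C_c(X_d)$ by a wavefront/thickening argument plus mixing, then truncate the Siegel transform and control the cusp contribution — is genuinely different from what the paper does, and the critical step of your plan is exactly the one the authors single out as unavailable. The paper does not truncate. Instead it proves the \emph{upper bound} (Theorem~\ref{theorem:siegel:equidistUpper}) directly and elementarily: approximate $f$ from above by step functions on rods $E$, use the exact formula
\[
\int_{K_d}\chi_{k^{-1}g_t^{-1}E}(\boldsymbol v)\,\wrt k \;=\; A_R^E(\|\boldsymbol v\|)\;=\;\frac{\vol_{\tau\s^{d-1}}(\tau\s^{d-1}\cap g_t^{-1}E)}{\vol_{\tau\s^{d-1}}(\tau\s^{d-1})},
\]
and bound $\sum_{\boldsymbol v\in\La\setminus\{0\}}A_R^E(\|\boldsymbol v\|)$ by a Riemann--Stieltjes integral $\int \#(B_{\Euc}(0,\tau)\cap\La\setminus\{0\})\,(-\wrt A_R^E(\tau))$. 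The geometric work is then to understand the caps $g_t^{-1}E\cap\tau\s^{d-1}$ for the weighted flow, which the paper does through a four-way case analysis on $(\boldsymbol r,\boldsymbol s)$. The \emph{lower bound} is then obtained for free from Theorem~\ref{theorem:DRS} by approximating $\widehat f$ from below by $C_c$ functions. No mixing rate, no Rogers moment, and no non-divergence estimate appears anywhere in the paper's argument.

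The gap in your proposal is the uniform-in-$t$ tail estimate
\[
\limsup_{t\to\infty}\int_K\bigl(\widehat f(1-\chi_L)\bigr)(g_tk\La)\,\wrt k \longrightarrow 0\qquad(L\to\infty),
\]
which you correctly flag as the main obstacle but do not actually prove. The tools you name do not combine in an obvious way: a Rogers-type bound controls $\int_{X_d}\widehat f^2\,\wrt\mu$, i.e.\ an integral over all of $X_d$, not over the translated orbit $\{g_tk\La:k\in K\}$; converting one into the other is essentially the equidistribution statement you are trying to prove, so the argument is circular as sketched. Likewise, the Kleinbock--Margulis quantitative non-divergence estimates you invoke are stated for one-parameter unipotent or diagonalizable flows, and the paper explicitly notes that the multiparameter case ``cannot, as far as we are aware'' be assembled from the Kleinbock--Margulis machinery — that remark is aimed precisely at the kind of assembly you are proposing. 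Even granting a uniform bound of the form $\mu_K\{k:\alpha_1(g_tk\La)>L\}\ll L^{-\eta}$, a Cauchy--Schwarz split would still require a uniform-in-$t$ bound on $\int_K\widehat f^{\,2}(g_tk\La)\,\wrt k$, which is again an unproved instance of the same type of statement. To make your route rigorous you would need to supply a genuinely new multiparameter tail bound, at which point you would have reproduced the difficulty that the paper's elementary cap/Riemann--Stieltjes method is specifically designed to bypass.
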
 

The above theorem is the generalization to the multiparameter case of our theorem for the single parameter case~\cite[Theorem~2.2]{AGT1}.  Unlike in the single parameter case where the proof can be assembled from the work of Kleinbock-Margulis~\cite[Appendix]{KM}, the multiparameter case cannot, as far as we are aware.  Instead, we generalize our proof of~\cite[Theorem~2.2]{AGT1}.  As in~\cite{AGT1}, the substantial part of the argument lies in the upper bound.

\begin{Theorem}\label{theorem:siegel:equidistUpper}
Let $f$ be a bounded function of compact support in  $\R^{d}$ whose set of discontinuities has zero Lebesgue measure.  Then for any $\La \in X_{d}$, $$ \lim_{t \to \infty} \int_{K_{d}} \widehat{f}(g^{(\boldsymbol{r}, \boldsymbol{s})}_t k \La) ~\wrt k \leq \int_{X_{d}}\widehat{f}~\wrt \mu.$$ 
\end{Theorem}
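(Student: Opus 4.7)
My plan is to follow the classical Kleinbock--Margulis blueprint~\cite{KM}: decompose $\widehat{f}$ into a bounded piece and a cuspidal tail, show the bounded piece equidistributes under the spherical average, and then kill the tail by a multiparameter non-divergence estimate. By Siegel's mean value theorem, $\int_{X_d}\widehat{f}\,\wrt\mu = \int_{\RR^d} f\,\wrt{\bf v}$, making the right-hand side a concrete Lebesgue integral, and a standard sandwich using the fact that $f$ is Riemann integrable lets me reduce to the case $f=\mathbf{1}_B$ for a rectangular box $B$.

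For $L>0$, I would introduce a Kleinbock--Margulis properness function $\alpha:X_d\to[1,\infty)$ (for instance $\alpha(\Lambda)=\max(1,1/\lambda_1(\Lambda))$, where $\lambda_1$ denotes the length of a shortest nonzero lattice vector) together with a continuous cutoff $\chi_L:X_d\to[0,1]$ equal to $1$ on $\{\alpha\leq L\}$ and to $0$ on $\{\alpha\geq 2L\}$, and split
\[
\int_K \widehat{f}(g_t k \Lambda)\,\wrt k = \int_K (\widehat{f}\chi_L)(g_t k \Lambda)\,\wrt k + \int_K (\widehat{f}(1-\chi_L))(g_t k \Lambda)\,\wrt k.
\]
The first integrand is bounded, compactly supported on $X_d$, and continuous off a $\mu$-null set, so equidistribution of the $K$-orbits $g_t K \Lambda$ against bounded test functions will give
\[
\lim_{t\to\infty}\int_K (\widehat{f}\chi_L)(g_t k \Lambda)\,\wrt k = \int_{X_d} \widehat{f}\chi_L\,\wrt\mu \leq \int_{X_d}\widehat{f}\,\wrt\mu.
\]
I would obtain this bounded-function equidistribution by the Margulis thickening trick: approximate the $K$-orbit by a small neighbourhood in $\SL_d(\RR)$, apply mixing of $g_t^{(\boldsymbol{r},\boldsymbol{s})}$ on $L^2(X_d,\mu)$ (Howe--Moore, since $g_t$ is a nontrivial $\R$-diagonalisable one-parameter flow), and let the neighbourhood shrink.

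The substantive step will be to show
\[
\lim_{L\to\infty}\sup_{t\geq T_0}\int_K \bigl(\widehat{f}(1-\chi_L)\bigr)(g_t k\Lambda)\,\wrt k = 0.
\]
Pointwise one has $\widehat{\mathbf{1}_B}(\Lambda) \leq C_B\,\alpha(\Lambda)^{d}$ by Minkowski, and unfolding the Siegel transform rewrites the cusp integral as a sum over $v\in \Lambda\setminus\{\boldsymbol{0}\}$ of terms of the form $\int_K \mathbf{1}[\|g_t k v\|<\epsilon]\,\wrt k$. By $K$-transitivity on $\s^{d-1}$ this equals the spherical measure of the shell $\{w\in\s^{d-1}:\|g_t w\|<\epsilon/\|v\|\}$---the trace on $\s^{d-1}$ of the ellipsoid $g_t^{-1}(B_{\epsilon/\|v\|})$. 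The main obstacle will be a $t$-uniform bound on this shell measure for the \emph{non-conformal} flow $g_t^{(\boldsymbol{r},\boldsymbol{s})}$: in the single-parameter setting of~\cite{AGT1}, $g_t$ centralises $\SO_m\times\SO_n$ and the shells are round annuli, whereas here they are genuine ellipsoids whose semi-axes depend non-trivially on all weights $r_i,s_j$. My plan is to partition $\s^{d-1}$ according to which coordinate hyperplane is nearest, estimate the shell measure on each piece via an anisotropic change of variables, and extract a $(C,\alpha)$-good bound with constants independent of $t$. Granting this, Chebyshev's inequality together with a geometric sum over $v$ will kill the tail as $L\to\infty$ uniformly for $t\geq T_0$, and combining with the bounded part yields the asserted $\limsup$ bound.
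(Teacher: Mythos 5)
Your proposal takes the Kleinbock--Margulis route (cutoff at a compactness level, equidistribution of the bounded piece via mixing, non-divergence for the cuspidal tail), which is genuinely different from what the paper does. Be aware that the authors flag exactly this route as problematic: in Section~\ref{sec:siegel:equidist} they write that the proof ``can be assembled from the work of Kleinbock--Margulis'' only in the single-parameter (equal-weight) case, and that ``the multiparameter case cannot, as far as we are aware.'' The step you label the ``main obstacle''---a $t$-uniform bound on the spherical measure of $\{w\in\s^{d-1}:\|g_t w\|<\epsilon/\|v\|\}$ for the non-conformal flow, summed over $v\in\Lambda$ and surviving the $\sup_{t\ge T_0}$---is precisely where their attempted adaptation of \cite{KM} broke down. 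In the conformal case $g_t$ centralises $\SO_m\times\SO_n$ and the shell is a round annulus whose measure is easy; once the weights differ the shell is a genuine anisotropic ellipsoidal band, and the required $(C,\alpha)$-good estimate with constants uniform in $t$ is left at the level of a one-sentence plan. That is not a routine computation, and as written your proof has a gap there.

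The paper sidesteps the uniform non-divergence question entirely. It reduces to indicator functions of boxes (rods), writes the spherical average as a decreasing function $A_R^E(\tau)$ of the radius, and expresses
\[
\sum_{\boldsymbol{v}\in\Lambda\setminus\{\boldsymbol{0}\}}A_R^E(\|\boldsymbol{v}\|)
\leq \int_{\tau_-}^{\tau_+}\#\bigl(B_{\Euc}(\boldsymbol{0},\tau)\cap\Lambda\setminus\{\boldsymbol{0}\}\bigr)\,\bigl(-\wrt A_R^E(\tau)\bigr)
\]
as a Riemann--Stieltjes integral. Because $\Lambda$ is \emph{fixed}, a shortest-vector radius $\tau_0(\Lambda)>0$ exists, and the lattice-point count in $B_{\Euc}(\boldsymbol{0},\tau)$ is controlled by $\tau^d$ with a constant depending only on $\Lambda$; no non-divergence of the flow is needed. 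The real work in the paper is then a case analysis over the weight structure of $(\boldsymbol{r},\boldsymbol{s})$ (four ``multiparameter types'') and over the position of the ball's centre relative to the invariant coordinate subspaces (four ``cases''), with careful partitioning of rods to handle the non-small ellipsoidal caps when several $s_j$ are maximal. What the paper's approach buys is elementariness---no spectral theory, no mixing rate, no $(C,\alpha)$-good functions---at the cost of a fairly involved geometric case analysis. What your approach would buy, if the uniform tail estimate could actually be proven, is conceptual uniformity and potential effectivity; but you would need to supply that estimate in full, and you should expect the anisotropy of $g_t^{(\boldsymbol{r},\boldsymbol{s})}$ to make it substantially harder than in \cite{KM}. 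As your proposal stands, the burden of proof sits entirely on the unproven shell-measure bound.

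Two smaller remarks. First, your reduction to $f=\mathbf{1}_B$ is fine, but the paper does not reduce to Euclidean balls: it uses rods adapted to the eigenstructure of $g_t^{(\boldsymbol{r},\boldsymbol{s})}$ (sup-norm or mixed Euclidean/sup-norm boxes), which is essential for the cap geometry to be tractable. Second, you note correctly that the bounded-piece equidistribution follows from a Duke--Rudnick--Sarnak/Shah theorem for sequences without accumulation points; this is indeed how the paper obtains the \emph{lower} bound in Theorem~\ref{theorem:siegel:equidist}, so that part of your plan aligns with the paper and is not where the difficulty lies.
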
 

\begin{rema}
The assumption that $f$ has compact support can be replaced with that of $f \in  L^1(\bb R^{d})$---the other assumptions are still, however, necessary for the proof.
\end{rema}

\begin{coro}
Let $f$ be a bounded Riemann-integrable function of compact support in  $\R^{d}$.  Then for any $\La \in X_{d}$, $$ \lim_{t \to \infty} \int_{K_{d}} \widehat{f}(g^{(\boldsymbol{r}, \boldsymbol{s})}_t k \La) ~\wrt k \leq \int_{X_{d}}\widehat{f}~\wrt \mu.$$ 
\end{coro}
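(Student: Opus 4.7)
The corollary is essentially immediate from Theorem~\ref{theorem:siegel:equidistUpper}, since its hypothesis is strictly stronger. The plan is to invoke the classical Lebesgue criterion for Riemann integrability to reduce to the setting of Theorem~\ref{theorem:siegel:equidistUpper}.

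Specifically, suppose $f$ is a bounded Riemann-integrable function of compact support on $\RR^d$. Let $B \subset \RR^d$ be a large closed box containing $\supp(f)$ in its interior. Riemann integrability of $f$ on $B$ is equivalent, by Lebesgue's criterion, to the statement that the set $D(f) \subset B$ of discontinuities of $f$ has Lebesgue measure zero. Since $f$ vanishes identically on the complement of its compact support and that complement is open, no new discontinuities are introduced outside $B$; hence the set of discontinuities of $f$ viewed as a function on all of $\RR^d$ has Lebesgue measure zero. Thus $f$ satisfies the hypothesis of Theorem~\ref{theorem:siegel:equidistUpper}, and the conclusion
\[ \lim_{t \to \infty} \int_{K_{d}} \widehat{f}(g^{(\boldsymbol{r}, \boldsymbol{s})}_t k \La) ~\wrt k \leq \int_{X_{d}}\widehat{f}~\wrt \mu \]
follows directly. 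Note that the right-hand side is finite by the Siegel Mean Value Theorem, which applies to any bounded compactly supported Riemann-integrable function.

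There is really no obstacle here: the only thing to verify is that ``Riemann-integrable with compact support'' implies ``bounded with discontinuity set of measure zero'', which is the Lebesgue criterion. The corollary is recorded separately only to make the statement directly applicable in the form needed later for the counting applications (Theorems~\ref{thmWeightedDASphereAve} and~\ref{thmMultiDASphereAve}), where the indicator functions $\widehat{\mathbf{1}}_{R_{A,\epsilon}}$, $\widehat{\mathbf{1}}_{P_{A,\epsilon}}$, and their companions arising from~\eqref{eq:Sphere} and~\eqref{eq:A} are naturally presented as Riemann-integrable (their boundaries have zero measure by the zero-measure-boundary hypothesis on $A$), rather than being checked via the discontinuity-set formulation.
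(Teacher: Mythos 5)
Your proof is correct and takes exactly the same route as the paper, which simply records the corollary as ``immediate from the theorem and the Lebesgue criterion.'' You have merely spelled out the (routine) verification that a compactly supported Riemann-integrable function on $\RR^d$ has discontinuity set of Lebesgue measure zero, so that Theorem~\ref{theorem:siegel:equidistUpper} applies.
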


\begin{proof}
Immediate from the theorem and the Lebesgue criterion.
\end{proof}

As mentioned in~\cite{AGT1}, the lower bound follows either from the methods in \cite{KleinMarg} or by applying the following  equidistribution theorem (Theorem~\ref{theorem:DRS}) of Duke, Rudnick and Sarnak (cf. \cite{DRS}) (see also Eskin and McMullen \cite{EMc} and Shah \cite{Shah}) and then approximating the Siegel transform $\widehat{f}$ from below by $h \in C_c(X_{d})$. 
\begin{Theorem}\label{theorem:DRS}
Let $G$ be a non-compact semisimple Lie group and let $K$ be a maximal compact subgroup of $G$. Let $\Gamma$ be a lattice in $G$, let $\lambda$ be the probabilty Haar measure on $G/\Gamma$, and let $\nu$ be any probability measure on $K$ which is absolutely continuous with respect to a Haar measure on $K$. Let $\{a_n\}$ be a sequence of elements of $G$ without accumulation points. Then for any $x \in G/\Gamma$ and any $h \in C_{c}(G/\Gamma)$,
$$ \lim_{n \to \infty} \int_{K} h(a_n k x)~\wrt\nu(k) = \int_{G/\Gamma}h~\wrt \lambda.$$ 
\end{Theorem}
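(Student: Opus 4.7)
The plan is to follow the mixing approach of Eskin and McMullen~\cite{EMc}, combining two ingredients: \emph{mixing} of the $\{a_n\}$-action on $(G/\Gamma,\lambda)$, and the \emph{wave front lemma}, which controls how small neighborhoods of the orbit $a_nKx$ look as $a_n$ leaves compact sets. Mixing in this setting asserts that for every $\phi_1,\phi_2\in L^2(G/\Gamma)$ with $\phi_1$ orthogonal to the constants,
\[
\int_{G/\Gamma}\phi_1(a_ny)\,\phi_2(y)\,d\lambda(y)\longrightarrow 0 \qquad (n\to\infty),
\]
a consequence of decay of matrix coefficients for unitary representations of $G$ (the Howe--Moore theorem, together with a short argument to absorb the $K$-components of $a_n$ when $G$ has compact factors), using only the hypothesis that $\{a_n\}$ has no accumulation points.

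Fix $h\in C_c(G/\Gamma)$ and $\varepsilon>0$. The goal is to produce a single test function $\widetilde{\Psi}\in L^2(G/\Gamma)$, assembled from small bumps $\Psi_{kx}$ centered near the various $kx$ with $k\in\supp\nu$, such that
\[
\int_K h(a_nkx)\,d\nu(k) \;=\; \int_{G/\Gamma} h(a_ny)\,\widetilde{\Psi}(y)\,d\lambda(y) \;+\; O(\varepsilon)
\]
uniformly in $n$ sufficiently large. The crux is the construction of these bumps. Decomposing $a_n=k_n^{(1)}b_nk_n^{(2)}$ via the Cartan $KAK$ decomposition, with $b_n$ in a closed Weyl chamber $A^+$, the wave front lemma supplies a neighborhood $V$ of the identity lying in the $\mathrm{Ad}(b_n)$-contracting horospherical subgroup, such that $va_nkx$ lies within the $\varepsilon$-modulus-of-continuity ball of $a_nkx$ for every $v\in V$, $k\in K$, and $n$ large. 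Taking a unit-mass $\psi\in C_c(V)$ and pushing it forward via $v\mapsto vx$ yields $\Psi_x$, and for a single $k$ one obtains
\[
\int_{G/\Gamma}h(a_ny)\,\Psi_x(k^{-1}y)\,d\lambda(y) \;=\; h(a_nkx) + O(\varepsilon).
\]
The $K$-averaging in the original integral absorbs the $K$-expanding directions not covered by $V$. Integrating in $k$ against $\nu$, after approximating $d\nu/dk$ in $L^1(K)$ by a smooth density (legitimate since $\nu\ll dk$), produces the displayed identity with a fixed $\widetilde{\Psi}\in L^2(G/\Gamma)$.

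Taking $n\to\infty$, mixing gives $\int h(a_ny)\widetilde{\Psi}(y)\,d\lambda(y)\to(\int h\,d\lambda)(\int\widetilde{\Psi}\,d\lambda)=\int h\,d\lambda$, after which letting $\varepsilon\to 0$ finishes the proof. The main obstacle is ensuring the wave front lemma applies uniformly across the various ways $a_n$ may diverge: if $b_n$ drifts toward a wall of the Weyl chamber, fewer contracting directions are available, and the choice of $V$ and $\Psi_x$ must be adapted, possibly after passing to a subsequence on which $b_n$ leaves compacta in a fixed sub-chamber. A secondary technicality is the $L^1$-approximation of $\nu$ by a smooth density, which is standard.
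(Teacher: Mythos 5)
The paper does not actually prove Theorem~\ref{theorem:DRS}: it is quoted as a known equidistribution result of Duke--Rudnick--Sarnak \cite{DRS}, with the alternative proofs of Eskin--McMullen \cite{EMc} and Shah \cite{Shah} also cited, and is then simply \emph{applied} (after approximating $\widehat{f}$ from below by $C_c$ functions) to obtain the lower-bound half of Theorem~\ref{theorem:siegel:equidist}. Your sketch follows the Eskin--McMullen mixing/wave-front route, so there is no conflict with the sources the paper invokes; but there is a genuine misstatement of the key lemma that leads you to manufacture an obstacle which does not exist.

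In \cite{EMc} the wave front lemma for the Riemannian symmetric pair $(G,K)$ says: for every neighborhood $U$ of $e$ in $G$ there is a neighborhood $V$ of $e$ \emph{in $G$} such that $gKV \subset UgK$ for \emph{all} $g\in G$. Thus $V$ is a full neighborhood of $e$, not a subset of an $\mathrm{Ad}(b_n)$-contracting horospherical subgroup, and it is chosen once and for all, independently of $n$ and of the Weyl-chamber direction of the $A$-part of $a_n$. Your ``main obstacle'' --- $b_n$ drifting toward a chamber wall, fewer contracting directions, having to pass to subsequences --- is therefore illusory: uniformity in $g$ is exactly what the lemma supplies, and no case analysis on walls is required. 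Two further slips. First, the perturbation produced by pushing $\psi$ forward under $v\mapsto vx$ and testing against $h\circ a_n$ is $a_nkvx$, not the ``$va_nkx$'' you write. Second, the identity you assert ``for a single $k$,'' namely $\int h(a_ny)\Psi_x(k^{-1}y)\,d\lambda = h(a_nkx)+O(\varepsilon)$, is not correct pointwise in $k$: the wave front lemma gives $a_nkv = u\,a_nk'$ with $u\in U$ small but $k'$ a \emph{different} element of $K$, so $h(a_nkvx)\approx h(a_nk'x)$, and the replacement $k\mapsto k'$ is only absorbed after integrating over $K$ against the (smoothed) density of $\nu$. Your next sentence recognises this absorption, but it contradicts the pointwise claim. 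With the lemma stated correctly and the pointwise claim replaced by the $K$-averaged comparison, the rest of your outline --- thickening the $K$-orbit to $\alpha(y)=\int_K\rho(k^{-1}y)\,d\nu(k)$, applying mixing (Howe--Moore) to $\langle h\circ a_n,\alpha\rangle$, and $L^1$-approximating $d\nu/dk$ by a smooth density --- is the standard argument of \cite{EMc}.
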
 

\begin{rema}
One can replace $\wrt k$ by $\wrt\nu(k)$ in Theorems~\ref{theorem:siegel:equidist}~and~\ref{theorem:siegel:equidistUpper} without any changes to the proofs.

\end{rema}


\subsection{Proof of Theorems~\ref{thmWeightedDASphereAve} and~\ref{thmMultiDASphereAve}}\label{secProofMainThms}  We prove Theorems~\ref{thmWeightedDASphereAve} and~\ref{thmMultiDASphereAve} using Theorem~\ref{theorem:siegel:equidist}, while deferring the proof of the latter to Section~\ref{secSiegelEquidisProof}.  Thus, applying Theorem~\ref{theorem:siegel:equidist} to the indicator function of $R_{A, \epsilon}$, we obtain

$$ \lim_{t \to \infty} \int_{K} \widehat{{\bf 1}}_{R_{A, \epsilon}}(g^{(\boldsymbol{r}, \boldsymbol{s})}_t k \Lambda)\wrt k =\int_{X_{d}} \widehat{{\bf 1}}_{R_{A, \epsilon}}\wrt \mu = \vol_{\RR^d}(R_{A, \epsilon}),  $$

\noindent where we have applied Siegel's mean value theorem in the last equality.\footnote{A proof that $\widehat{{\bf 1}}_{R_{A, \epsilon}}, \widehat{{\bf 1}}_{R_{\epsilon}}, \widehat{{\bf 1}}_{P_{A, \epsilon}}, \widehat{{\bf 1}}_{P_{\Sp, \epsilon}}$ are Riemann-integrable is analogous to that in~\cite[Footnote~4]{AGT1}.}  Doing likewise for $R_{\epsilon}$, $P_{A, \epsilon}$, and $P_{\Sp,\epsilon}$, we obtain

\begin{align*}
 \lim_{T \to \infty} \frac{\int_{K}\#\{k \Lambda \cap R_{A, \epsilon, T}\}~\wrt k}{\int_{K}  \#\{k \Lambda \cap R_{\epsilon, T}\}~\wrt k}  &= \frac{\vol_{\RR^d}(R_{A, \epsilon})}{\vol_{\RR^d}(R_{\epsilon})} \\
 \lim_{T \to \infty} \frac{\int_{K} \#\{k \Lambda \cap P_{A,\epsilon, T}\}~\wrt k}{\int_{K} \#\{k \Lambda \cap P_{\Sp, \epsilon, T}\}~\wrt k}  &= \frac{\vol_{\RR^d}(P_{A, \epsilon})}{\vol_{\RR^d}(P_{\Sp, \epsilon})},
 \end{align*}\noindent which proves our desired results.  Note that (\ref{eqnFiniteVolSMAD}) with $T=1$ gives that $\vol_{\RR^d}(P_{\Sp, \epsilon})<\infty$.

\subsection{Proof of Theorem~\ref{thmMultiDASphereAveCuspSet}}  As in the Section~\ref{secProofMainThms}, we use Theorem~\ref{theorem:siegel:equidist} before its proof.  Let $\{\delta_i\}$ be a sequence of positive real numbers decreasing to $0$.  Then \[A \supset \cup_i A \cap \Sp(\delta_i).\]  Let $\Co_i := \Co (A \cap \Sp(\delta_i))$.  Applying Theorem~\ref{theorem:siegel:equidist}, we have \[\lim_{T \to \infty} \int_{K}\#\{k \Lambda \cap P_{A \cap \Sp(\delta_i), \epsilon, T}\}~\wrt k  = \vol_{\RR^d}(P_{A \cap \Sp(\delta_i), \epsilon}) =O\bigg(\vol_{\RR^m}(\Co_i)\bigg),\] which $\rightarrow \infty$ as $i \rightarrow \infty$ by Lemma~\ref{lemmInfiniteVolCusp}.

\section{Proof of Theorem~\ref{theorem:siegel:equidistUpper}}\label{secSiegelEquidisProof}  We adapt our proof in~\cite[Section~3]{AGT1} from the single parameter (i.e. the diagional action is $\RR$-rank $1$) case to the multiparameter (i.e. the diagional action is any allowed $\RR$-rank) case.  Recall our diagional action is \[g^{(\boldsymbol{r}, \boldsymbol{s})}_t =:g_t.\] As mentioned, to prove Theorem~\ref{theorem:siegel:equidist}, we need only show the upper bound (Theorem~\ref{theorem:siegel:equidistUpper}):  \begin{equation}
 \lim_{t \to \infty} \int_{K_{d}} \widehat{f}(g_t k \La) ~\wrt k \leq \int_{X_{d}}\widehat{f}~\wrt \mu.  
\end{equation}


\noindent Fix a unimodular lattice $\Lambda \in X_d$. The strategy of the proof is to approximate using step functions on balls. We will divide the proof into four types of multiparameter actions:  \begin{enumerate}
\item $\boldsymbol{r} := (1/m, \cdots, 1/m)$ and $n=1$.
\item $\boldsymbol{r}$  is an arbitrary probability $m$-vector and $n=1$.
\item $\boldsymbol{r}$ is an arbitrary probability $m$-vector  and $\boldsymbol{s}$ is an probability $n$-vector such there exist a unique entry $j$ for which $s_j = \|\boldsymbol{s}\|$ where $\|\cdot \|$ is the sup norm.
\item $\boldsymbol{r}$ is an arbitrary probability $m$-vector and $\boldsymbol{s}$ is an arbitrary probability $n$-vector.
\end{enumerate}

\noindent The first type is just our single parameter case~\cite[Theorem~2.2]{AGT1}.


\subsection{Proof for the second type of multiparameter}\label{sec2typeMultpara}  In this section, $\boldsymbol{r}$  is an arbitrary probability $m$-vector and $n=1$.

Using~\cite[Section~3.4]{AGT1} without change, we will approximate using step functions on balls, where we use the norm on $\R^{d} = \R^m \times \R$ given by the maximum of the Euclidean norm in $\R^m = \mbox{span}(e_1, \cdots, e_m)$ and the absolute value in $\R = \mbox{span}(e_{d})$.  Hence, balls will be open regions of $\RR^{d}$, which we also refer to as \textit{rods} or \textit{solid cylinders}.  As in~\cite{AGT1}, we need four cases:  balls centered at $\boldsymbol{0} \in \RR^{d}$, balls centered in $\spn(e_{d}) \backslash\{\boldsymbol{0}\}$, balls centered in $\spn(e_1, \cdots, e_m) \backslash\{\boldsymbol{0}\}$, and all other balls.  Since we will approximate using step functions, it suffices (as we had shown in~\cite[Section~3.4]{AGT1}) to assume that the balls in the second case do not meet $\boldsymbol{0}$ and in the last case do not meet $\spn(e_{d}) \cup \spn(e_1, \cdots, e_m)$.\footnote{We note that the second and the fourth cases already suffice to show Theorems~\ref{thmWeightedDASphereAve},~\ref{thmMultiDASphereAve}, and~\ref{thmMultiDASphereAveCuspSet}.\label{FootnoteCase24}}    Let $E := B(\boldsymbol{w}, r)$ be any such ball and $\chi_E$ be its characteristic function.  By the monotone convergence theorem, we have \[\int_{K_{d}} \widehat{\chi}_E(g_t k \La) ~\wrt k =\sum_{\boldsymbol{v} \in \La \backslash \{\boldsymbol{0}\} }\int_{K_{d}} \chi_{k^{-1} g_t^{-1} E}(\boldsymbol{v}) ~\wrt k.\]  

It is more convenient to prove the second and fourth cases together and before the others.  Let $E$ be a rod in either of these two cases.  Let $r$ be small.  Let $R:=e^t$.  Fix $R$, or equivalently, $t$ be a large value.  Now $g_t^{-1} E$ is also a rod, but narrow in the directions given by $\RR^m$ and long in the direction given by $e_d$.  Recall from~\cite[Section~3]{AGT1}, we have  \begin{align}\label{eqnInvarofRotationonSphere}\int_{K_{d}} \chi_{k^{-1} g_t^{-1} E}(\boldsymbol{v}) ~\wrt k  =: A_R^E(\|\boldsymbol{v}\|)\end{align} and \begin{align}
A_R^E(\tau) = \frac {\vol_{\tau \s^m}(\tau \s^m \cap g_t^{-1} E)}{\vol_{\tau\s^m}(\tau\s^m)}.  
\end{align}  Also, recall, from~\cite[Section~3]{AGT1}, the definition of a \textit{cap} $\Ca(\tau)$ , namely it is the intersection of the rod $g_t^{-1} E$ with the sphere $\tau \s^{m}$.  Now, unlike in~\cite{AGT1}, the caps are no longer spherical, but, for fixed $R$, are ellipsoidial of fixed eccentricity.  All our geometric considerations are for a fixed $R$ (which is only allowed to $\rightarrow \infty$ at the end).  In particular, $A_R^E(\tau)$ is a strictly decreasing smooth function with respect to $\tau$.  Let $B_{\Euc}(\boldsymbol{0}, \tau)$ denote a ball of radius $\tau$ in $\RR^{d}$ with respect to the Euclidean norm.  Now it follows from the formula for $A_R^E$ that \[\sum_{\boldsymbol{v} \in \La \backslash \{\boldsymbol{0}\} } A_R^E(\|\boldsymbol{v}\|) \leq \int_{\tau_-}^{\tau_+} \#\big(B_{\Euc}(\boldsymbol{0}, \tau) \cap\La \backslash \{\boldsymbol{0}\}\big)   ~(-\wrt A_R^E(\tau))\] where the integral is the Riemann-Stieltjes integral and the integrability of the function $\#\big(B_{\Euc}(\boldsymbol{0}, \tau) \cap\La \backslash \{\boldsymbol{0}\}\big)$ follows from its monotonicity and the continuity and monotoncity of $A_R^E(\tau)$.  The rest of the proof is identical to that in~\cite[Section~3]{AGT1} and shows \[ \lim_{t \rightarrow \infty} \int_{K_{d}} \widehat{\chi}_E(g_t k \La) ~\wrt k \leq\vol_{\RR^d}(E).\]

Finally, we prove the first and third case together.  Let $E$ be a rod in either of these two cases.  The difference between these two cases and the second and fourth cases is that the rod extends in both the positive $e_d$ and negative $e_d$ directions.  As the lattice $\Lambda$ is fixed, there is a ball $B_{\Euc}(\boldsymbol{0}, \tau_0))$ in $\RR^d$ that does not meet $\Lambda \backslash \{\boldsymbol{0}\}$ for some $\tau_0>0$ depending only on $\Lambda$.  Therefore, we can consider the two ends separately.  The proof is the same as in~\cite[Section~3.3]{AGT1}, except that $\mathfrak{B}$ is not a sphere, but an elliposoid of fixed eccentricity depending on $R$ (which, recall is fixed until the end of the proof), but this does not affect the proof.  Consequently, for the second type of multiparameter, we can conclude \[ \lim_{t \rightarrow \infty}  \int_{K_{d}} \widehat{\chi}_E(g_t k \La) ~\wrt k \leq\vol_{\RR^d}(E).\]

\subsection{Proof for the third type of multiparameter}\label{sec3typeMultpara}   In this section, $\boldsymbol{r}$ is an arbitrary probability $m$-vector  and $\boldsymbol{s}$ is an probability $n$-vector such there exist a unique entry $j$ for which $s_j = \|\boldsymbol{s}\|$ where $\|\cdot \|$ is the sup norm.   On the other hand, for the rods that we define for this multiparameter type, we will use the norm on $\R^{d} = \R^m \times \R^n$ given by the maximum of the Euclidean norm in $\mbox{span}(e_1, \cdots, e_{m+j-1}, e_{m+j+1}, \cdots e_{d})$ and the absolute value in $\mbox{span}(e_{m+j})$.  As before, we have four cases:  balls centered at $\boldsymbol{0} \in \RR^{d}$, balls centered in $\spn(e_{m+j}) \backslash\{\boldsymbol{0}\}$, balls centered in $\spn(e_1, \cdots, e_{m+j-1}, e_{m+j+1}, \cdots e_{d})\backslash\{\boldsymbol{0}\}$, and all other balls (again, Footnote~\ref{FootnoteCase24} applies).  Again, we may assume that the balls in the second case do not meet $\boldsymbol{0}$ and in the last case do not meet $\spn(e_{m+j}) \cup \spn(e_1, \cdots, e_{m+j-1}, e_{m+j+1}, \cdots e_{d})$. 

Now $g_t^{-1}$ has a unique largest expanding direction, namely along $e_{m+j}$.  Replace the role of $e_d$ from Section~\ref{sec2typeMultpara} with $e_{m+j}$.  Let $R = e^{s_jt}$.  Fix a large $R$, then the analysis of the geometry of $g_t^{-1} E$ is analogous to that in Section~\ref{sec2typeMultpara} because, for a fixed large $R$, the rod is much longer in along the $e_{m+j}$ direction than any other.  The only difference is that there exists a minimum sphere radius $\widetilde{\tau}(R)$ larger than which the analysis of the geometry is valid because some directions are expanding (but less than in $e_{m+j}$).  However, for $R$ large, $\widetilde{\tau}(R)$ is small in comparision to the length of the rod $\tau_+(R)$ (which is on the order of $R$).  In particular, $\lim_{R \rightarrow \infty} \widetilde{\tau}(R)/ \tau_+(R) =0$.  Consequently (as shown in~\cite[Section~3.3]{AGT1}  for example, the error is $O(R^{-1})$, which does not affect the proof.  The conclusion, in all four cases, is \[\lim_{t \rightarrow \infty} \int_{K_{d}} \widehat{\chi}_E(g_t k \La) ~\wrt k \leq\vol_{\RR^d}(E).\]

\subsection{Proof for the fourth type of multiparameter}  In this section, $\boldsymbol{r}$ is an arbitrary probability $m$-vector  and $\boldsymbol{s}$ is an arbitrary probability $n$-vector.  We may assume without loss of generality that there exist indices $1 \leq j_1< \cdots< j_\ell \leq n$ such that $s_{j_1} = \cdots = s_{j_\ell} = \|\boldsymbol{s}\|=:\lambda$ and $2\leq \ell \leq n$.  (Again $\|\cdot\|$ denotes the sup norm.)  Let $\widetilde{\lambda}$ denote the largest component of $\boldsymbol{s}$ strictly less than $\lambda$, or, if no such component exists, set $\widetilde{\lambda}=1$.  Let us denote this set of indices $J$ and the remaining indices by $J^c$, and note that $J \sqcup J^c = \{1, \cdots, n\}$.  The main difference and problem with this case is that caps are no longer relatively small in relation to the largest dimension of the rod.  To take care of this problem, we adapt the proof in Section~\ref{sec3typeMultpara} in two ways, the first for the analog of first and third cases and the second for the analog of the second and fourth cases.  

We use two types of balls/rods.  For the balls/rods that we define for this multiparameter type for the first and third case, we will use the norm on $\R^{d} = \R^m \times \R^n$ given by the maximum of the Euclidean norm in \[\mbox{span}(\bigcup_{i=1}^m e_i \cup \bigcup_{j \in J^c} e_{m+j})\] and the sup norm in \[\mbox{span}(\bigcup_{j \in J} e_{m+j}).\]  For the balls/rods that we define for this multiparameter for the second and fourth cases, we use the sup norm until almost the end of the proof (again, Footnote~\ref{FootnoteCase24} applies).  As before, we have the four cases:  balls centered at $\boldsymbol{0} \in \RR^{d}$, balls centered in $\mbox{span}(\bigcup_{j \in J} e_{m+j}) \backslash\{\boldsymbol{0}\}$, balls centered in $\mbox{span}(\bigcup_{i=1}^m e_i \cup \bigcup_{j \in J^c} e_{m+j})\backslash\{\boldsymbol{0}\}$, and all other balls.  Again, we may assume that the balls in the second case do not meet $\boldsymbol{0}$ and in the last case do not meet $\mbox{span}(\bigcup_{i=1}^m e_i \cup \bigcup_{j \in J^c} e_{m+j}) \cup \mbox{span}(\bigcup_{j \in J} e_{m+j})$.  Let $E := B(\boldsymbol{w}, r)$.  We prove each case in turn---for convenience of exposition, we prove the cases in the order first, third, second, and fourth.

\subsubsection{The first case:  balls centered at $\boldsymbol{0}$}  Let $R = e^{\lambda t}$.  Fix a large $R$.  Consider the rod $g_t^{-1} E$.  The directions $J$ are all expanded to a radius of $Rr$.  All other directions are expanding less or contracting.  As in Section~\ref{sec3typeMultpara}, there exists a minimal radius $\widetilde{\tau}(R)$ larger than which the analysis of the geometry is valid and we can choose $\widetilde{\tau}(R) =3 e^{\widetilde{\lambda}t}$; hence, we have $\lim_{R \rightarrow \infty} \widetilde{\tau}(R)/ Rr =0$, which implies we can ignore radius smaller than $\widetilde{\tau}(R)$.  As mentioned, caps $\Ca(\tau)$ are no longer small, but this does not affect the analysis of the geometry from Section~\ref{sec3typeMultpara} up to the inequality \begin{align*}\sum_{\boldsymbol{v} \in \La \backslash \{\boldsymbol{0}\} } A_R^E(\|\boldsymbol{v}\|) &\leq O(R^{-\ell}) + d \int_{\widetilde{\tau}}^{Rr}  (1+ \varepsilon)  \vol(B_{\Euc}(\boldsymbol{0}, 1))\frac{C(\widetilde{\tau})}{{\vol_{\s^d}(\s^d)}} ~\wrt \tau \\ & = O(R^{-\ell}) + d (1 + \varepsilon) \frac {\vol(B_{\Euc}(\boldsymbol{0}, 1))}{{\vol_{\s^d}(\s^d)}} C(\widetilde{\tau}) (Rr - \widetilde{\tau})\end{align*} where $C(\tau)=\vol_{\s^d}\Ca(\tau)$ and the $O(R^{-\ell})$ comes from $\tau < \widetilde{\tau}(R)$.  Now $C(\widetilde{\tau}) (Rr - \widetilde{\tau})$ is the volume of the rod with a (relatively) small hole missing.  Letting $R \rightarrow \infty$ and $\varepsilon \rightarrow 0$ yields our desired result:  \[\lim_{t \rightarrow \infty} \int_{K_{d}} \widehat{\chi}_E(g_t k \La) ~\wrt k \leq\vol_{\RR^d}(E).\]


\subsubsection{The third case:  balls centered at $\mbox{span}(\bigcup_{i=1}^m e_i \cup \bigcup_{j \in J^c} e_{m+j})\backslash\{\boldsymbol{0}\}$}  The proof is similar to the first case.  In any expanding directions $\bigcup_{j \in J} e_{m+j}$, the components of $\boldsymbol{w}$ are zero and hence $g_t^{-1} \boldsymbol{w}$ has at most expansion at a rate of $e^{\widetilde{\lambda}t}$.  Let $R = e^{\lambda t}$.  Fix a large $R$.  Let $\widetilde{\varepsilon}:= \widetilde{\varepsilon}(R):=\|g_t^{-1} \boldsymbol{w}\|/R$.  Hence, $\lim_{R \rightarrow \infty} \widetilde{\varepsilon}(R)= 0.$  Consequently, using the analogous proof as in the first case for a slightly larger rod (replacing $r$ with $(1 + 2\widetilde{\varepsilon})r$), we have \begin{align*}\sum_{\boldsymbol{v} \in \La \backslash \{\boldsymbol{0}\} } A_R^E(\|\boldsymbol{v}\|) &\leq O(R^{-\ell})+ d \int_{\widetilde{\tau}}^{\tau_+}  (1+ \varepsilon)  \vol(B_{\Euc}(\boldsymbol{0}, 1))\frac{C(\widetilde{\tau})}{{\vol_{\s^d}(\s^d)}} ~\wrt \tau \end{align*} where $\tau_+ := Rr (1+ 2\widetilde{\varepsilon})$ and $C(\widetilde{\tau}) (\tau^+ - \widetilde{\tau}) \rightarrow \vol_{\RR^d}(E)$ as $R \rightarrow \infty$.  This yields our desired result:  \[\lim_{t \rightarrow \infty} \int_{K_{d}} \widehat{\chi}_E(g_t k \La) ~\wrt k \leq\vol_{\RR^d}(E).\]



\subsubsection{The second case: balls centered in $\mbox{span}(\bigcup_{j \in J} e_{m+j}) \backslash\{\boldsymbol{0}\}$}  Of the indices in $J$ pick one, say $j_1$.  Let us first consider the special case that $\boldsymbol{w} = w e_{m+j_1}$ for some $w \neq 0$.  This index will play the role of $d$ from Section~\ref{sec2typeMultpara}.  Let $I=\{1, \cdots, m\} \cup \{m+j : j \in J^c\}$ and $R = e^{\lambda t}$.  For the second (and fourth) cases, we will assume an additional condition (which we later show does not affect the generality of our result):  for a fixed $\alpha \geq 1$, we only consider balls $E$ for which \begin{align}\label{eqnRadBnd}\frac{\dist\left(\boldsymbol{w}, \spn(\bigcup_{i \in I} e_i)\right) - r}{r}\geq \alpha\end{align} holds.  Recall that our ball $E$ is given by the sup norm---it is a $d$-cube. Its translate $E - \boldsymbol{w}$ has exactly one vertex $\boldsymbol{p}$ with all positive coordinates.  Let us change $E$ into a ``half-closed'' ball $F$ by the union of all the $d-1$ hyperfaces of the cube with $\boldsymbol{p} + \boldsymbol{w}$ as vertex.  Any half-closed ball will be constructed like this.  We will refer to $F$ and its $g_t$-translates as \textit{half-closed rods} or simply \textit{rods} if the context is clear.  Fix a large $R$.  Consider the rod $g_t^{-1} F$ and one of the $d-1$-dimensional faces that is normal to $e_{m+j_1}$---call this face $\F$ and note that it is a $d-1$-dimensional box.


Choose a large natural number $N$.  Partition the smallest side length of $\F$ into $N$ segments of length $L$.  For each of the other side lengths in $\F$, partition into segments whose length is nearest to $L$.  This partitions $\F$ into $\Nu(N)$ boxes with the same side lengths and, furthermore, each of which can be contained in a $d-1$-dimensional cube of side length $2L < 2/N$.  Let us index these little boxes by $k$.  The cartesian product of each of these little cubes with the $e_{m+j_1}$-th coordinate of $g_t^{-1} F$ are rods, which we make into half-closed rods in the way specified above.  This is a partition of $g_t^{-1} F$ such that there is only one direction, namely $e_{m+j_1}$, that is long.  To each element of this partition, cases two and four of Section~\ref{sec2typeMultpara} applies (the fact that each element is a half-closed rod as opposed to an open rod does not affect the proof in Section~\ref{sec2typeMultpara}).  Since this is a partition, elements are pairwise disjoint and we may sum over each element of the partition to obtain \begin{align*}\sum_{\boldsymbol{v} \in \La \backslash \{\boldsymbol{0}\} } A_R^F(\|\boldsymbol{v}\|) &\leq d \sum_{k=1}^{\Nu(N)} \int_{\tau_k^-}^{\tau_k^+}  (1+ \varepsilon)  \vol(B_{\Euc}(\boldsymbol{0}, 1))\frac{C_k(\tau_k^-)}{{\vol_{\s^d}(\s^d)}} ~\wrt \tau \\ & = d (1 + \varepsilon) \frac {\vol(B_{\Euc}(\boldsymbol{0}, 1))}{{\vol_{\s^d}(\s^d)}}  \sum_{k=1}^{\Nu(N)} C_k(\tau_k^-) (\tau_k^+ - \tau_k^-) \end{align*} where $\tau_k^-$ and $\tau_k^+$ are, up to $O(R^{-1})$, the minimum and maximium radii such that $\tau \s^d$ meets the $k$-th partition element and $C_k(\tau)$ is the volume of the cap of the $k$-th element, i.e. $C_k(\tau) = \vol_{\tau \s^d}(\Ca_k(\tau))$ where $\Ca_k(\tau)$ is the intersection of $k$-th partition element with $\tau \s^d$.  Within $O(R^{-1})$, $\tau_k^+ - \tau_k^-$ is the length of the rod $g_t^{-1} F$ along the $e_{m+j_1}$ direction.   Now $C_k(\tau_-) (\tau_+ - \tau_-)$ is the volume of an element that has length along $e_{m+j_1}$ within $O(R^{-1})$ of the length along $e_{m+j_1}$ of $g_t^{-1} F$, but with cross-section volume $C_k(\tau_-)$.  Since in the second case (\ref{eqnRadBnd}) holds, a direct calculation (using trigonometry) gives that \begin{align}\label{eqnLargeCapEst}C_k(\tau_-) \leq \widetilde{\gamma} \frac{\vol_{\RR^{d-1}}(\B_k)}{\sin^{d-1}(\pi/2 - \arcsin(1/\alpha))} \end{align} where $\widetilde{\gamma} >1$ is a number depending only $N$ and $\alpha$ such that $\widetilde{\gamma} \searrow 1$ as $N, \alpha \rightarrow \infty$ and $\B_k$ is the interesection of the $d-1$-dimensional hyperplane normal to $e_{m+j_1}$ with the $k$-th partition element.  Consequently, for large $N, \alpha$, \[\sum_{k=1}^{\Nu(N)} C_k(\tau_k^-) (\tau_k^+ - \tau_k^-)\] is arbitrarily close to $\vol_{\RR^d}(E)$.  As $ \frac {\vol(B_{\Euc}(\boldsymbol{0}, 1))}{{\vol_{\s^d}(\s^d)}}= \frac 1 {d+1}$, letting $R \rightarrow \infty$ and $\varepsilon \rightarrow 0$, we have our desired result for the special case:  \[\lim_{t \rightarrow \infty} \int_{K_{d}} \widehat{\chi}_E(g_t k \La) ~\wrt k \leq\vol_{\RR^d}(E),\] up to the restrictions that the balls are now half-closed and that (\ref{eqnRadBnd}) must hold.  Likewise, we have the same conclusion for $\boldsymbol{w} = w e_{m+j}$ for any $j \in J$.

We now consider the second case in general.  We may assume that $\boldsymbol{w} \in \mbox{span}(\bigcup_{j \in J} e_{m+j})=:\Pl_J$ but not in any of the coordinate axes.  Let $\boldsymbol{q}:=\boldsymbol{q}(R)$ denote the point of $\overline{g_t^{-1} F}$ with smallest Euclidean norm.  By convexity, it is easy to see that the point $\boldsymbol{q}$ is unique (for fixed $R$) and that $\boldsymbol{q} \in \mbox{span}(\bigcup_{j \in J} e_{m+j})$.  We remark that $\boldsymbol{q}$ is an eigenvector of $g_t^{-1}$ and thus the direction of $\boldsymbol{q}$ is fixed for all $R$.  Let $\|\cdot\|_J$ be the sup norm in $\Pl_J$.  Rotate a coordinate axis to the direction of $\boldsymbol{q}$---doing this to a half-closed $\|\cdot\|_J$-ball of radius $\beta$ yields a rotated half-closed $\ell$-cube $\Cu(\beta)$ with side length $2 \beta$.  Cover $F \cap \Pl_J$ by a partition of affine translates of $\bigcup_t \Cu_t(\beta)$ where $\beta>0$ is a constant so small that $\vol_{\RR^\ell}(F \cap \Pl_J)$ is less than but as close to $\vol_{\RR^\ell}(\bigcup_t \Cu_t(\beta))$ as desired.  For each $\Cu_t(\beta)$ take the cartesian product with the other directions of $F$ to obtain $\widetilde{F}_t(\beta)$.  Then $\vol_{\RR^d}(F)$ is less than but as close to $\vol_{\RR^d}(\bigcup_t \widetilde{F}_t(\beta))$ as desired.  Choose $t$ so large that $e^{\lambda t}\beta$ is larger than the $R$ chosen in the special case (where the center is on the axis) above---this gives us a much larger $R$ for this, the second case in general.  Now the special case holds for each $\widetilde{F}_t(\beta)$ and applying it to each and summing over the partition and noting that the volume of the partition is arbitrarily close to $\vol_{\RR^d}(E)$ yields the second case in general:  \[\lim_{t \rightarrow \infty} \int_{K_{d}} \widehat{\chi}_E(g_t k \La) ~\wrt k \leq\vol_{\RR^d}(E),\] up to the restrictions that the balls are now half-closed and that (\ref{eqnRadBnd}) must hold.


\subsubsection{The fourth case: all other balls}  This is an adaption of the second case.  The difference is that $\boldsymbol{q} \notin \Pl_J$.  Let $\|\cdot\|_I$ be the sup norm in the $\spn(\bigcup_{i \in I} e_{i}) =: \Pl_I$ and let $\boldsymbol{q}_I$ and $\boldsymbol{q}_J$ be the orthogonal projections of $\boldsymbol{q}$ onto $\Pl_I$ and $\Pl_J$, respectively.  Then \[\frac{\|\boldsymbol{q}_I(R)\|_I}{\|\boldsymbol{q}_J(R)\|_J} = 0\] as $R \rightarrow \infty$.  Consequently, for $R$ large enough, (\ref{eqnLargeCapEst}) holds and thus the proof of the second case also applies to this case, allowing us to conclude:   \[\lim_{t \rightarrow \infty} \int_{K_{d}} \widehat{\chi}_E(g_t k \La) ~\wrt k \leq\vol_{\RR^d}(E),\] up to the restrictions that the balls are now half-closed and that (\ref{eqnRadBnd}) must hold. 

\subsubsection{Finishing the second and fourth cases}  We wish to prove the second and fourth cases for the balls defined for the first and third cases (i.e. in terms of the product of Euclidean norms).  To remove the restriction of half-closed rods, consider the measure zero boundaries of the half-closed rods at each stage.  Using~\cite[Lemma~3.5]{AGT1} to approximate this measure zero set and the method of handling the null term from~\cite[Section~3.4]{AGT1}, we can remove this restriction.  To remove the restriction given by (\ref{eqnRadBnd}), we note that~\cite[Lemmas~3.1 and~3.5]{AGT1} apply to the balls of the second and fourth case with the restriction (\ref{eqnRadBnd}) because $\alpha$ is fixed and the ball given by the product of the Euclidean norms not do meet $\Pl_I$.  This is all that is needed to apply~\cite[Section~3.4]{AGT1}.  Doing so allows us to conclude \[\lim_{t \rightarrow \infty} \int_{K_{d}} \widehat{\chi}_E(g_t k \La) ~\wrt k \leq\vol_{\RR^d}(E),\] where $E$ is a ball in the same norm as for the first and third case.

\subsection{Finishing the proof of Theorem~\ref{theorem:siegel:equidistUpper}}  For each mutiparameter type, we apply~\cite[Section~3.4]{AGT1} without change. 


\section{Appendix}  We prove Lemma~\ref{lemmInfiniteVolCusp}.  Recall that $\|\boldsymbol{v}\|_{\pr} := \prod_{i=1}^\ell |v_i|$; let $\ell=m$ in this section.  Let \[S := \{ \boldsymbol{v} \in \RR^m :\|\boldsymbol{v}\|_{\pr} \leq 1\}.\]

\begin{lemm}\label{lemmInfiniteVolCusp}  Let $\boldsymbol{w}$ be on a great sphere for $\s^{m-1}$.  Let $A:=B(\boldsymbol{w}, r) \cap \s^{m-1}$ for some $r>0$.  Then 
$$\vol_{\RR^m}(\Co A \cap S) = \infty.$$  


 
\end{lemm}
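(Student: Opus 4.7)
The plan is to convert the volume integral to a surface integral on $\s^{m-1}$ via a polar-type decomposition adapted to the cone $\Co A$, and then extract a non-integrable singularity from the integrand at the great-sphere point $\boldsymbol{w}$.

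First, writing $\boldsymbol{v} = t \boldsymbol{u}$ with $t \geq 0$ and $\boldsymbol{u} \in \s^{m-1}$, we have $d\boldsymbol{v} = t^{m-1} \, dt \, d\sigma(\boldsymbol{u})$, and for $\boldsymbol{u}$ with $\prod_i |u_i| > 0$ the condition $t\boldsymbol{u} \in S$ is equivalent to $0 \leq t \leq (\prod_i |u_i|)^{-1/m}$. Integrating $t$ out first yields
\[
\vol_{\RR^m}(\Co A \cap S) \;=\; \frac{1}{m} \int_A \prod_{i=1}^m |u_i|^{-1} \, d\sigma(\boldsymbol{u}),
\]
with the convention $1/0 = +\infty$. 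It therefore suffices to show the right-hand side is $+\infty$.

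Second, since $\boldsymbol{w}$ lies on some great sphere $\s_{i_0}$ we have $w_{i_0} = 0$, and since $|\boldsymbol{w}| = 1$ at least one other coordinate of $\boldsymbol{w}$ is nonzero; relabeling, assume $|w_m| > 0$. On a neighborhood of $\boldsymbol{w}$ in $\s^{m-1}$ we parameterize by $(u_1, \ldots, u_{m-1})$ via $u_m = \sgn(w_m)\sqrt{1 - \sum_{i<m} u_i^2}$, under which a direct computation gives $d\sigma = |u_m|^{-1} du_1 \cdots du_{m-1}$. Because $A = B(\boldsymbol{w}, r) \cap \s^{m-1}$ is an open spherical cap containing $\boldsymbol{w}$, we can find $\delta_i > 0$ for $i < m$ so that the product box $B := \prod_{i<m} [w_i - \delta_i, w_i + \delta_i]$ parameterizes a subset of $A$, with $|u_m| \geq |w_m|/2$ throughout, and with $[w_i - \delta_i, w_i + \delta_i]$ bounded away from $0$ for every $i < m$ having $w_i \neq 0$.

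Third, on the subset of $A$ parameterized by $B$ the combined factor $\prod_i |u_i|^{-1} \cdot |u_m|^{-1}$ (the $|u_m|^{-1}$ coming from the surface element) is bounded below by a constant $C > 0$ times $\prod_{i \in Z} |u_i|^{-1}$, where $Z := \{\, i < m : w_i = 0 \,\} \supseteq \{i_0\}$. By Fubini,
\[
\int_A \prod_{i=1}^m |u_i|^{-1} \, d\sigma(\boldsymbol{u}) \;\geq\; C \prod_{i \in Z} \int_{-\delta_i}^{\delta_i} |u_i|^{-1} \, du_i \;=\; +\infty,
\]
since each one-dimensional factor diverges. Combined with the first step this gives $\vol_{\RR^m}(\Co A \cap S) = \infty$.

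The only real obstacle is bookkeeping around the parameterization: verifying the surface-element formula, accommodating the possibility that $\boldsymbol{w}$ has several zero coordinates, and ensuring that a product box genuinely lies inside the spherical cap. All of this is routine once the polar decomposition reduces the question to a divergent one-dimensional integral of $|u|^{-1}$ across the origin.
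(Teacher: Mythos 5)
Your proof is correct, and while it shares the paper's polar-coordinate philosophy it organizes the decomposition in the opposite order, which is a genuine simplification. You integrate the radial variable $t$ first and get the exact identity
\[
\vol_{\RR^m}(\Co A \cap S) \;=\; \frac{1}{m}\int_A \prod_{i=1}^m |u_i|^{-1}\,d\sigma(\boldsymbol{u}),
\]
after which divergence is isolated in the non-integrable factor $|u_{i_0}|^{-1}$ across the great sphere, via the chart $u_m=\sgn(w_m)\sqrt{1-\sum_{i<m}u_i^2}$ (Jacobian $|u_m|^{-1}$) and Tonelli. The paper instead integrates over $\tau$ last, estimating $\vol_{\tau\s^{m-1}}(\tau A\cap S)$ pointwise in $\tau$ and then integrating; this forces it to take $r$ small, to separate the case where $\overline{A}$ meets a coordinate axis, and to implicitly assume $\boldsymbol{w}$ lies on exactly one great sphere, and the resulting estimates are looser and harder to state precisely. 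Your version sidesteps all three restrictions: it holds for any $r>0$, and if $\boldsymbol{w}$ has several vanishing coordinates the index set $Z$ simply has more elements, each contributing another divergent one-dimensional factor. The two points you flagged as routine really are: the surface-element formula is the standard graph Jacobian, and Tonelli applies since the integrand is nonnegative and factors through separate variables on the product box.
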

\begin{proof}  For $m=1$, a great sphere is simply the intersection of an axis with the circle.  Elementary calculus gives the result.  


We may assume that $m \geq 2$.  Without loss of generality, we may assume that $r$ is small.  There are two cases.  First assume that $\overline{A}$ does not meet any coordinate axes.  Then there exists exactly one coordinate in which the points in $\overline{A}$ may have small absolute value.  By reordering indices if necessary, we may assume that the $m$-th coordinate is the one that has small absolute values.  In the other directions, the absolute values are bounded away from $0$.  In other words, given a constant $c>0$, we have 
$$\prod_{i=1}^{m-1}|v_i| \geq c$$ 
\noindent for all $\boldsymbol{v} \in \overline{A}$.  Note that $\vol_{\s^{m-1}}(A) =O(r^{m-1})$ and that 
$$\prod_{i=1}^{m-1}|v_i| = O(\vol_{\s^{m-1}}(A))$$
\noindent for all $\boldsymbol{v} \in \overline{A}$ (because $r$ is small).  Consequently, for large $\tau$, we have that 
$$\prod_{i=1}^{m-1}|\tau v_i| = O(\vol_{\tau \s^{m-1}}(\tau A)).$$ 
\noindent Perhaps by cutting off the part of the cone nearest to the origin, we have that  $\Co A \cap S$ is the graph of the function over $A$ determined by $\prod_{i=1}^{m}|x_i| =1$, giving us that $ \vol_{\tau \s^{m-1}}(\tau A \cap S) |\tau v_m| =O(1)$.  This implies that $|v_m| \tau^m \leq O(1)$.  Riemann integration now gives
$$\vol_{\RR^m}(\Co A \cap S) = const \int_{1}^\infty \vol_{\tau \s^{m-1}}(\tau A \cap S)~\wrt \tau =  const \int_{1}^\infty \frac 1 {|\tau v_m|}~\wrt \tau \geq const \int_{1}^\infty \tau^{m-1}~\wrt \tau = \infty.$$
\noindent  We note that $const$ depends on how close $A$ is to a coordinate axis.


The other case is when $\overline{A}$ meets coordinate axes.  Since $r$ is small, it may only meet one.  Pick an open ball $\widetilde{B} \subset A$ that avoids the axis and apply the previous proof.
 
\end{proof}

\section{Concluding remarks and generalisations}

\noindent As we have noted in \cite{AGT1}, Theorem \ref{theorem:siegel:equidist} is reminiscent of some results in the literature, for instance the work of Eskin-Margulis-Mozes, where the authors average over a different compact group. Closest to our work is the result of Kleinbock-Margulis who provide a proof in \cite{KM} Corollary A.8 of a very general averaging result for one parameter flows, or more precisely, flows with the property ``EM" on quotients of semisimple groups by irreducible lattices. We refer the reader to \cite{KM} for precise statements and the definition of the property ``EM". The main tool in loc. cit. is exponential decay of correlation coefficients of H\"{o}lder vectors. In \cite{AGT1}, we provide a new, elementary proof of the above theorem for one parameter flows on $\SL_{n}(\R)/\SL_{n}(Z)$. On the other hand, as we have mentioned, the higher rank averaging result in the present paper is new.

Several authors have investigates variations of Dirichlet's theorem in the context of number fields. Let $k$ be a number field which we assume to be totally real for convenience, let $S$ be the set of infinite places of $k$ and let $O_S$ be the ring of $S$-integers of $k$. For $x \in k^n$, we define the $S$ height to be
$$ h_{S}(x) := \prod_{v \in S} \max_{1 \leq i \leq n}\{|x_i|_v\} $$
\noindent where $|~|_v$ denotes the $v$-adic valuation on the completion $k_v$ of $k$. In \cite{Burger}, Burger proves\footnote{He actually proves his result for matrices with entries in $k_v$} that for every $x_v \in k_v$ and $Q > c(k)^{(1+n)/n}$, there exist $q \in O^{n}_{S}$ and $p \in O_S$ such that $h_{S}(q) \leq Q$ and
$$ \prod_{v \in S}|x_v q - p|_{v} \leq c(k)^{1+n}Q^{-n}$$
\noindent where $c(k) = \Delta^{1/2d}_{k}$, $d$ is the degree of the number field and $\Delta_k$ is the discriminant of $k$. Using the methods of \cite{AGT1} and the above mentioned Theorem of Kleinbock-Margulis applied to 
$$ G = \prod_{v \in S}\SL_{n+1}(k_v), \Gamma = \SL_{n+1}(O_S)$$
\noindent and $g_t = (\diag(e^{nt}, e^{-t}, \dots, e^{-t}))_{v \in S}$ one can obtain equidistribution of approximates for the analogue of Dirichlet's theorem for number fields as above. Other generalisations of Dirichlet's theorem have been established by W. M. Schmid\cite{Schmidt-num}, Q\^{u}eme \cite{Queme}, Hattori \cite{Hattori}, see also \cite{EGL} where another analogue of Dirichlet's theorem and an analogue of badly approximable vectors in number fields is investigated. It would not be difficult to obtain equidistribution of approximates in each of these cases with appropriate choices of $G$, $\Gamma$ and $g_t$. However, multplicative, weighted and versions for more places ($p$-adic) do not follow from the above methods. It would be an interesting problem to generalise our higher rank result in this paper to include other semisimple Lie groups and also to include finite, i.e. $p$-adic places.

\end{document}